\documentclass[a4paper,12pt,oneside]{article}
\usepackage[left=1.5cm,right=1.5cm,top=3cm,bottom=3cm]{geometry}

\usepackage[latin1]{inputenc}
\usepackage{amssymb, amsmath, tikz, stmaryrd, amsfonts, latexsym, amscd, amsthm, enumitem, epstopdf, graphicx, caption, float, multirow, url, epstopdf, xcolor, enumerate, fancyhdr, afterpage}
\usepackage[normalem]{ulem}
\usepackage[backref]{hyperref}
\usepackage[alphabetic,backrefs,lite]{amsrefs}
\usepackage[all]{xy}

\DeclareGraphicsExtensions{.pdf,.eps,.png,.jpg,.mps}

\newtheorem*{theorem*}{Theorem}
\newtheorem*{corollary*}{Corollary}

\newtheorem{theorem}{Theorem}[section]
\newtheorem{lemma}[theorem]{Lemma}
\newtheorem{proposition}[theorem]{Proposition}
\newtheorem{corollary}[theorem]{Corollary}

\theoremstyle{definition}

\newtheorem{remark}[theorem]{Remark}

\newtheorem{question}[theorem]{Question}

\makeatletter
\renewcommand\@biblabel[1]{\hfill#1.}
\makeatother

\begin{document}
\title{Images of Galois representations in mod $p$ Hecke algebras}
\author{Laia Amor\'{o}s}

\newcommand{\Addresses}{{
\bigskip
\footnotesize
Laia Amor\'os, \textsc{Department of Mathematics and Systems Analysis, Otakaari 1, Espoo, FI-00076 Finland}\par\nopagebreak
  \textit{E-mail address} \texttt{laia.amoros@aalto.fi}
}}

\date{}

\maketitle

\abstract{
\small
Let $(\mathbb{T}_f,\mathfrak{m}_f)$ denote the mod $p$ local Hecke algebra attached to a normalised Hecke eigenform $f$, which is a commutative algebra over some finite field $\mathbb{F}_q$ of characteristic $p$ and with residue field $\mathbb{F}_q$. By a result of Carayol we know that, if the residual Galois representation $\overline{\rho}_f:G_\mathbb{Q}\rightarrow\mathrm{GL}_2(\mathbb{F}_q)$ is absolutely irreducible, then one can attach to this algebra a Galois representation $\rho_f:G_\mathbb{Q}\rightarrow\mathrm{GL}_2(\mathbb{T}_f)$ that is a lift of $\overline{\rho}_f$. We will show how one can determine the image of $\rho_f$ under the assumptions that $(i)$ the image of the residual representation contains $\mathrm{SL}_2(\mathbb{F}_q)$, $(ii)$ $\mathfrak{m}_f^2=0$ and $(iii)$ the coefficient ring is generated by the traces. As an application we will see that the methods that we use allow to deduce the existence of certain $p$-elementary abelian extensions of big non-solvable number fields.
}

\bigskip
\small
\textbf{Mathematics Subject Classification (2010)} 11F80, 11F33, 20E34.
\smallskip

\small
\textbf{Keywords} Galois representations, Hecke algebras, modular forms.

\section{Introduction}

Fix a weight $k\geq2$ and a level $N\geq1$. Denote by $S_k(N;\mathbb{C})$ the complex vector space of cusp forms of weight $k$, level $N$ and trivial character.
The space $S_k(N;\mathbb{C})$ is furnished with Hecke operators $T_p$ for every prime $p$.
These operators commute with one another and they generate a finite-dimensional commutative $\mathbb{Z}$-subalgebra inside $\mathrm{End}_\mathbb{C}(S_k(N;\mathbb{C}))$, called the \textit{Hecke algebra} of $S_k(N;\mathbb{C})$ and denoted by $\mathbb{T}_k(N)$.
A modular form $f=\sum_{n\geq0}a_n(f)q^n\in\mathrm{S}_k(N;\mathbb{C})$ that is a simultaneous eigenvector for all Hecke operators is called an \textit{eigenform} or \textit{Hecke eigenform}. It is said to be \textit{normalised} if $a_1(f)=1$.

Let $S_k(N;\mathbb{Z})$ denote the abelian group of cusp forms with coefficients in $\mathbb{Z}$.
For any $\mathbb{Z}$-algebra $\sigma:\mathbb{Z}\rightarrow R$, we let $S_k(N;R):=S_k(N;\mathbb{Z})\otimes_\mathbb{Z}R$.
There is a natural isomorphism of $\mathbb{Z}$-modules
$$\begin{array}{rccc}
\lambda:& S_k(N;R) & \rightarrow & \mathrm{Hom}_\mathbb{Z}(\mathbb{T}_k(N), R) \\
& f & \mapsto & \lambda_f: T_n\mapsto a_n(f).
\end{array}$$
Let $\mathbb{T}_R:=\mathbb{T}_k(N)\otimes_\mathbb{Z}R$. Then we also have an isomorphism 
$\mathrm{Hom}_\mathbb{Z}(\mathbb{T}_k(N), R)\simeq\mathrm{Hom}_R(\mathbb{T}_R, R)$ (cf. \cite{Wiese06}).
Thus, every element $f\in S_k(N;R)$ corresponds to a linear function $\Phi:\mathbb{T}_R\rightarrow R$ and is uniquely identified by its formal $q$-expansion $f=\sum_n\Phi(T_n)q^n=\sum_na_n(f)q^n$,

Let $R$ be a complete local ring with maximal ideal $\mathfrak{m}$ and residual field $\mathbb{F}$ of characteristic $p$.
Let $f\in\mathrm{S}_k(N;R)$ denote a normalised Hecke eigenform given by a ring homomorphism $\lambda_f:\mathbb{T}_k(N)\rightarrow R$.
Let $\overline{f}\in\mathrm{S}_k(N;\mathbb{F})$ denote the residual form with coefficients in $\mathbb{F}$, which corresponds to a homomorphism
$\lambda_{\overline{f}}=\overline{\lambda}_f:\mathbb{T}_k(N)\rightarrow\mathbb{F}$, $T_n \mapsto  a_n(f)\mbox{ mod }\mathfrak{m}$,
given by the reduction modulo the maximal ideal of $R$. In fact, $f$ has coefficients in some finite field $\mathbb{F}_q\subseteq\mathbb{F}$. Then it is well-known (see for example \cite{Deligne-Serre74}, Theorem 6.7) that we can attach to $\overline{f}$ a residual Galois representation
$\overline{\rho}_f:G_\mathbb{Q}\rightarrow\mathrm{GL}_2(\mathbb{F}_q)$
which is continuous, unramified outside $Np$ and, for every $\ell\nmid Np$, the following relations hold
$$\mathrm{tr}(\overline{\rho}_f(\mathrm{Frob}_\ell))=\overline{\lambda}_f(T_\ell)\quad\mbox{and}\quad\mathrm{det}(\overline{\rho}_f(\mathrm{Frob}_\ell))=\ell^{k-1}.$$

Fix some normalised Hecke eigenform $f(z)=\sum_{n=0}^\infty a_n(f)q^n\in\mathrm{S}_k(N;\mathbb{C})$, whose coefficients lie in some ring of integers of a number field, and let $\overline{\rho}_f:G_\mathbb{Q}\rightarrow\mathrm{GL}_2(\overline{\mathbb{F}}_p)$ denote the attached mod $p$ Galois representation described above.
Let $\mathbb{F}_q$ be the finite field generated by the $a_\ell(f)$ mod $p$. Let $\mathbb{T}_{\mathbb{F}_q}:=\mathbb{T}_\mathbb{Z}\otimes_\mathbb{Z}\mathbb{F}_q$ and denote by $\overline{\mathbb{T}}\subseteq\mathbb{T}_{\mathbb{F}_q}$ the $\mathbb{F}_q$-subalgebra generated by the operators $T_\ell$ with $\ell\nmid Np$. Consider the ring homomorphism $\overline{\lambda}_f: \overline{\mathbb{T}} \rightarrow \mathbb{F}_q$ and let $\mathfrak{m}_f:=\mbox{ker}(\overline{\lambda}_f)$, which is a maximal ideal of $\overline{\mathbb{T}}$.
We denote by $\overline{\mathbb{T}}_{\mathfrak{m}_f}$ the localisation of $\overline{\mathbb{T}}$ at $\mathfrak{m}_f$. Then $\overline{\mathbb{T}}_{\mathfrak{m}_f}$ is a commutative local finite-dimensional $\mathbb{F}_q$-algebra
and, if moreover we assume that the residual Galois representation $\overline{\rho}_f$ is absolutely irreducible, by Theorem 3 in \cite{Carayol94} we obtain a continuous Galois representation
$$\rho_f:G_\mathbb{Q}\rightarrow\mathrm{GL}_2(\overline{\mathbb{T}}_{\mathfrak{m}_f})$$
which is unramified outside $Np$ and, for every $\ell\nmid Np$, one has:
$$\mathrm{tr}(\rho_f(\mathrm{Frob}_\ell))=T_\ell\quad\mbox{and}\quad\mathrm{det}(\rho_f(\mathrm{Frob}_\ell))=\ell^{k-1}.$$
This representation is unique up to conjugation 
and $\overline{\rho}_f=\pi\circ\rho_f$, where $\pi$ extends the natural projection $\pi:\overline{\mathbb{T}}_{\mathfrak{m}_f}\rightarrow\mathbb{F}_q$. In this setting, one is naturally interested in the image of $\rho_f$.

We will be studying the case $\mathrm{Im}(\overline{\rho}_f)=\mathrm{GL}_2^D(\mathbb{F}_q)=\{g\in\mathrm{GL}_2(\mathbb{F}_q)\mid\det(g)\in D\}$, where $D$ is a subgroup of $\mathbb{F}_q^\times$. Let  $\mathrm{M}_2^0(\mathbb{F}_q)$ denote the trace-0 matrices with coefficients in $\mathbb{F}_q$, denote by $\mathbb{S}$ the subspace of scalar matrices in $\mathrm{M}_2^0(\mathbb{F}_q)$ and let $\mathrm{GL}^D_2(\mathbb{T}):=\{g\in\mathrm{GL}_2(\mathbb{T}) \mid \det(g)\in D \}$, where $\mathbb{T}$ denotes a finite dimensional local commutative $\mathbb{F}_q$-algebra.
In the present paper we will prove the following result.

\begin{theorem*}
	Let $\mathbb{F}_q$ denote a finite field of characteristic $p$ and $q=p^d$ elements, and suppose that $q\neq2,3,5$. Let $(\mathbb{T},\mathfrak{m}_{\mathbb{T}})$ be a finite-dimensional local commutative $\mathbb{F}_q$-algebra equipped with the discrete topology, and with residue field $\mathbb{T}/\mathfrak{m}_\mathbb{T}\simeq\mathbb{F}_q$. Suppose that $\mathfrak{m}_\mathbb{T}^2=0$. Let $\Pi$ be a profinite group and let $\rho:\Pi\rightarrow\mathrm{GL}_2(\mathbb{T})$ be a continuous representation such that
	\begin{itemize}
		\item[$(a)$] $\mathrm{Im}(\rho)\subseteq\mathrm{GL}_2^D(\mathbb{T})$, where $D\subseteq\mathbb{F}_q^\times$ is a subgroup.
		\item[$(b)$] $\mathrm{Im}(\overline{\rho})=\mathrm{GL}_2^D(\mathbb{F}_q)$, where $\overline{\rho}$ denotes the reduction $\rho\ \mathrm{mod}\ \mathfrak{m}_\mathbb{T}$.
		\item[$(c)$] $\mathbb{T}$ is generated as $\mathbb{F}_q$-algebra by the set of traces of $\rho$.
	\end{itemize}
	Let $m:=\mathrm{dim}_{\mathbb{F}_q}\mathfrak{m}_\mathbb{T}$ and let $t$ be the number of different traces in $\mathrm{Im}(\rho)$.
	\begin{itemize}
		\item[$(i)$] If $p\neq2$, then $t=q^{m+1}$ and
	$$\mathrm{Im}(\rho)\simeq(\underbrace{\mathrm{M}_2^0(\mathbb{F}_q)\oplus\ldots\oplus\mathrm{M}_2^0(\mathbb{F}_q)}_m)\rtimes\mathrm{GL}_2^D(\mathbb{F}_q)\simeq\mathrm{GL}_2^D(\mathbb{T}).$$
		\item[$(ii)$] If $p=2$, then
		$t=q^\alpha\cdot((q-1)2^\beta+1),\mbox{ for some unique }0\leq \alpha\leq m\mbox{ and }0\leq\beta\leq d(m-\alpha)$,
	and in this case $\mathrm{Im}(\rho)\simeq M\rtimes\mathrm{GL}^D_2(\mathbb{F}_q)$, where $M$ is an $\mathbb{F}_2[\mathrm{GL}^D_2(\mathbb{F}_q)]$-submodule of $\mathrm{M}_2^0(\mathfrak{m}_\mathbb{T})$ of the form
	$$M\simeq\underbrace{\mathrm{M}_2^0(\mathbb{F}_q)\oplus{\ldots}\oplus\mathrm{M}_2^0(\mathbb{F}_q)}_{\alpha}\oplus\underbrace{C_2\oplus\cdots\oplus C_2}_{\beta},$$
	where $C_2\subseteq\mathbb{S}$ is a subgroup with $2$ elements of the scalar matrices. Moreover, $M$ is determined uniquely by $t$ up to isomorphism.
	\end{itemize}
\end{theorem*}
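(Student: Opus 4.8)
The plan is to translate the statement into a question about $\mathbb{F}_p[\mathrm{SL}_2(\mathbb{F}_q)]$-submodules of the adjoint representation together with a count of traces, and then to do the representation theory of $\mathrm{SL}_2(\mathbb{F}_q)$ separately in odd and even characteristic.

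First I would analyse the kernel of reduction. Because $\mathfrak{m}_\mathbb{T}^2=0$, writing a matrix congruent to the identity as $I+X$ with $X\in\mathrm{M}_2(\mathfrak{m}_\mathbb{T})$ gives $(I+X)(I+Y)=I+X+Y$ and $\det(I+X)=1+\mathrm{tr}(X)$; hence $\ker\bigl(\mathrm{GL}_2(\mathbb{T})\to\mathrm{GL}_2(\mathbb{F}_q)\bigr)$ is abelian, additively isomorphic to $\mathrm{M}_2(\mathfrak{m}_\mathbb{T})$, and $\ker\bigl(\mathrm{GL}_2^D(\mathbb{T})\to\mathrm{GL}_2^D(\mathbb{F}_q)\bigr)=I+\mathrm{M}_2^0(\mathfrak{m}_\mathbb{T})$, since $1+\mathrm{tr}(X)\in D\subseteq\mathbb{F}_q^\times$ forces $\mathrm{tr}(X)=0$. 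Setting $G:=\mathrm{Im}(\rho)$ and $M:=G\cap(I+\mathrm{M}_2^0(\mathfrak{m}_\mathbb{T}))$, assumption $(b)$ gives a short exact sequence $1\to M\to G\to\mathrm{GL}_2^D(\mathbb{F}_q)\to1$ in which conjugation makes $M$ an $\mathbb{F}_p$-subspace of $\mathrm{M}_2^0(\mathfrak{m}_\mathbb{T})$ stable under the adjoint action of $\mathrm{GL}_2^D(\mathbb{F}_q)$, hence of $\mathrm{SL}_2(\mathbb{F}_q)$. A direct computation using $\mathfrak{m}_\mathbb{T}^2=0$ shows that for $g\in G$ lying over $\bar g$ and $X\in M$ one has $\mathrm{tr}\bigl(g(I+X)\bigr)=\mathrm{tr}(g)+\mathrm{tr}(\bar gX)$, and a conjugation argument inside a fibre shows that the coset $\mathrm{tr}(g)+\{\mathrm{tr}(\bar gX):X\in M\}$ depends only on the $\mathrm{SL}_2(\mathbb{F}_q)$-conjugacy class of $\bar g$. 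Since the traces of $\mathrm{GL}_2^D(\mathbb{F}_q)$ already exhaust $\mathbb{F}_q$ when $q\ne2,3,5$, assumption $(c)$ becomes the assertion that $\mathfrak{m}_\mathbb{T}$ is the $\mathbb{F}_q$-span of the elements $\mathrm{tr}(\bar gX)$ together with the ``trace defects'' $\mathrm{tr}(g)-\mathrm{tr}(\bar g)$.

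Second I would settle the module theory. Write $\mathrm{M}_2^0(\mathfrak{m}_\mathbb{T})\cong\mathrm{M}_2^0(\mathbb{F}_q)\otimes_{\mathbb{F}_q}\mathfrak{m}_\mathbb{T}$ with $\mathrm{SL}_2(\mathbb{F}_q)$ acting on the first factor only. For $p$ odd and $q\ne3,5$ the module $\mathrm{M}_2^0(\mathbb{F}_q)=\mathrm{ad}^0$ is absolutely irreducible with $\mathrm{End}_{\mathbb{F}_p[\mathrm{SL}_2(\mathbb{F}_q)]}=\mathbb{F}_q$, so every $\mathbb{F}_p[\mathrm{SL}_2(\mathbb{F}_q)]$-submodule of $\mathrm{M}_2^0(\mathfrak{m}_\mathbb{T})$ is $\mathrm{M}_2^0(W)$ for some $\mathbb{F}_q$-subspace $W\subseteq\mathfrak{m}_\mathbb{T}$. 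For $p=2$ the module $\mathrm{ad}^0$ is uniserial: its socle is the line $\mathbb{S}=\mathbb{F}_qI$ of scalar matrices with trivial action, the quotient $\mathrm{ad}^0/\mathbb{S}$ is the Frobenius twist $(\mathbb{F}_q^2)^{(2)}$ of the natural module, which is irreducible over $\mathbb{F}_2$ with endomorphism ring $\mathbb{F}_q$, and $\mathrm{Ext}^1_{\mathbb{F}_2[\mathrm{SL}_2(\mathbb{F}_q)]}\bigl((\mathbb{F}_q^2)^{(2)},\mathbf{1}\bigr)$ is one-dimensional over $\mathbb{F}_q$. From this one reads off that every $\mathbb{F}_2[\mathrm{SL}_2(\mathbb{F}_q)]$-submodule $M$ of $\mathrm{M}_2^0(\mathfrak{m}_\mathbb{T})$ is governed by an $\mathbb{F}_q$-subspace $W_1\subseteq\mathfrak{m}_\mathbb{T}$ (with $M/(M\cap\mathbb{S}(\mathfrak{m}_\mathbb{T}))\cong(\mathbb{F}_q^2)^{(2)}\otimes_{\mathbb{F}_q}W_1$) together with an $\mathbb{F}_2$-subspace $\overline V\subseteq\mathfrak{m}_\mathbb{T}/W_1$ recording $M\cap\mathbb{S}(\mathfrak{m}_\mathbb{T})$, and hence $M\cong\mathrm{M}_2^0(\mathbb{F}_q)^{\oplus\alpha}\oplus C_2^{\oplus\beta}$ with $\alpha=\dim_{\mathbb{F}_q}W_1$ and $\beta=\dim_{\mathbb{F}_2}\overline V\le d(m-\alpha)$; the appearance of $C_2$ rather than a larger group is exactly the one-dimensionality over $\mathbb{F}_q$ of that $\mathrm{Ext}^1$.

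Third I would run the two cases. For $p$ odd, feeding $(c)$ into the classification and using that the trace pairing on $\mathrm{M}_2(\mathbb{F}_q)$ is perfect with $\mathrm{M}_2^0(\mathbb{F}_q)^{\perp}=\mathbb{F}_qI$ forces $W=\mathfrak{m}_\mathbb{T}$, so $M=\mathrm{M}_2^0(\mathfrak{m}_\mathbb{T})$ and $G=\mathrm{GL}_2^D(\mathbb{T})$; the section $\mathbb{F}_q\hookrightarrow\mathbb{T}$ of the structure map then gives $G\simeq\mathrm{M}_2^0(\mathbb{F}_q)^{\oplus m}\rtimes\mathrm{GL}_2^D(\mathbb{F}_q)\simeq\mathrm{GL}_2^D(\mathbb{T})$, and since $\mathrm{SL}_2(\mathbb{F}_q)$ has non-scalar elements of every trace, each such fibre contributes a full coset of $\mathfrak{m}_\mathbb{T}$, whence $t=|\mathbb{T}|=q^{m+1}$. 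For $p=2$ the fibrewise trace formula combines with the conjugacy-class structure of $\mathrm{SL}_2(\mathbb{F}_q)$ in characteristic $2$: there is a single class for each of the $q-1$ nonzero traces, and exactly two classes of trace $0$, namely $\{I\}$ and the (order-$2$) unipotents. For $c\ne0$ each fibre contributes a coset of size $q^\alpha2^\beta$ (the $C_2$-summands contributing $v\mapsto vc$, the $\mathrm{M}_2^0(\mathbb{F}_q)$-summands a full $\mathbb{F}_q$-line); the identity class contributes $\{0\}$; and for the unipotent class, Cayley--Hamilton applied to a lift $h=\rho(\gamma)$ of a unipotent $u_0$ — using $\overline{\rho}(\gamma)^2=I$, $\det h\in D\subseteq\mathbb{F}_q^\times$ so $\det h=1$, and $\mathfrak{m}_\mathbb{T}^2=0$ — gives $\bigl(\mathrm{tr}(h)\bigr)u_0\in M$, forcing $\mathrm{tr}(h)\in W_1$, so this class contributes a coset of size $q^\alpha$ through $0$. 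Summing, $t=q^\alpha+(q-1)q^\alpha2^\beta=q^\alpha\bigl((q-1)2^\beta+1\bigr)$. The same Cayley--Hamilton idea, applied to a lift of an element of order $4$ squaring to a central involution, rules out the would-be non-split central extensions (e.g. $\mathrm{SL}_2(\mathbb{F}_5)$ over $\mathrm{SL}_2(\mathbb{F}_4)$), so $1\to M\to G\to\mathrm{SL}_2(\mathbb{F}_q)\to1$ splits and $G\simeq M\rtimes\mathrm{SL}_2(\mathbb{F}_q)$; finally $(\alpha,\beta)$, hence the isomorphism type of $M$, is recovered from $t$, since $t$ is a power of $q$ exactly when $\beta=0$ and otherwise $v_2(t)=d\alpha$.

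The main obstacle is the even-characteristic analysis, in three interlocking pieces: (i) the precise module-theoretic input for $\mathrm{SL}_2(\mathbb{F}_q)$ in characteristic $2$ — the uniserial shape of $\mathrm{ad}^0$, the identification of its head with $(\mathbb{F}_q^2)^{(2)}$, and the one-dimensionality over $\mathbb{F}_q$ of the relevant $\mathrm{Ext}^1$ — which is what pins down the list of submodules $M$ up to isomorphism and forces the factor $C_2$; (ii) the trace count, where one must check that the trace defects $\mathrm{tr}(\rho(\gamma))-\mathrm{tr}(\overline{\rho}(\gamma))$ produce no extra values, this being the step where the Cayley--Hamilton computation on lifts of involutions is essential and where $t$ is shown to depend only on $(\alpha,\beta)$; and (iii) the splitting of the extension, where small cases such as $q=4$ (with $\mathrm{SL}_2(\mathbb{F}_q)\cong A_5$ having a nontrivial Schur multiplier) must be excluded by hand rather than by a vanishing-cohomology argument. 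The odd case is comparatively soft once one has the absolute irreducibility of $\mathrm{ad}^0$ and the perfectness of the trace form.
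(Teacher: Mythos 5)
Your reduction to a short exact sequence $1\to M\to G\to\mathrm{GL}_2^D(\mathbb{F}_q)\to1$ with $M$ an $\mathbb{F}_p[\mathrm{GL}_2^D(\mathbb{F}_q)]$-submodule of $\mathrm{M}_2^0(\mathfrak{m}_\mathbb{T})$, the classification of the possible $M$ (absolute irreducibility of $\mathrm{M}_2^0(\mathbb{F}_q)$ for $p$ odd; socle $\mathbb{S}$ with simple head and the resulting list $\mathrm{M}_2^0(\mathbb{F}_q)^{\oplus\alpha}\oplus C_2^{\oplus\beta}$ for $p=2$), the fibrewise trace count, and the recovery of $(\alpha,\beta)$ from $t$ all run parallel to the paper's proof (which uses Lemma \ref{modules}, Lemma \ref{submodules2} and Corollary \ref{cor}). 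The genuine gap is the splitting of the extension $1\to M\to G\to\mathrm{GL}_2^D(\mathbb{F}_q)\to1$, which is part of the conclusion and which you dispose of by ``ruling out the would-be non-split central extensions'' via Cayley--Hamilton on lifts of elements of small order. The obstruction is not central: it lives in $H^2(\mathrm{GL}_2^D(\mathbb{F}_q),M)$, and this group is \emph{non-zero} whenever $M$ contains a copy of $\mathrm{M}_2^0(\mathbb{F}_q)$, because the length-two Witt vector extension $1\to\mathrm{M}_2^0(\mathbb{F}_q)\to\mathrm{SL}_2(W_2(\mathbb{F}_q))\to\mathrm{SL}_2(\mathbb{F}_q)\to1$ is non-split (this is exactly the paper's Lemma \ref{nonsplit}, quoting Manoharmayum). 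Hence no computation of orders of lifts of individual elements, and no blanket cohomology vanishing, can give the splitting; one must show that the \emph{specific} class of $G$ is trivial. The paper does this through the image-splitting theorem (Theorem \ref{image}): the ambient extension $1\to\mathrm{M}_2^0(\mathfrak{m}_\mathbb{T})\to\mathrm{GL}_2^D(\mathbb{T})\to\mathrm{GL}_2^D(\mathbb{F}_q)\to1$ splits because $\mathbb{T}$ is an $\mathbb{F}_q$-algebra (equal characteristic), and the map $H^2(\mathrm{GL}_2^D(\mathbb{F}_q),M)\to H^2(\mathrm{GL}_2^D(\mathbb{F}_q),\mathrm{M}_2^0(\mathfrak{m}_\mathbb{T}))$ is injective (Lemma \ref{H2}, a Manoharmayum-type computation), so the class of $G$ must already vanish in $H^2(\mathrm{GL}_2^D(\mathbb{F}_q),M)$. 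Your proposal contains no substitute for this step.

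The missing splitting also undermines two subsidiary points. First, your trace count and the deduction that condition $(c)$ forces $\alpha=m$ when $p$ is odd rely on every element of $G$ being of the form $(1+\mu)h$ with $h$ in a fixed copy of $\mathrm{GL}_2^D(\mathbb{F}_q)$; without the splitting you must control the ``trace defects'' $\mathrm{tr}(\rho(\gamma))-\mathrm{tr}(\overline{\rho}(\gamma))$ on every fibre, and your Cayley--Hamilton argument only handles the fibre over the unipotent class (where $\overline{\rho}(\gamma)^2=1$); on the remaining fibres the defects could shift the cosets and change both $t$ and the span of the traces. Second, even for the genuinely central part of $M$ in the case $q=4$ (where $\mathrm{SL}_2(\mathbb{F}_4)\simeq A_5$ has Schur multiplier $C_2$), showing that lifts of involutions square to the identity only trivialises the restriction of the class to cyclic subgroups; one still needs to pass to the Sylow $2$-subgroup and then to the whole group, which you do not do. In short, the second half of your argument is sound and close to the paper's, but the first and hardest half --- establishing $G\simeq M\rtimes\mathrm{GL}_2^D(\mathbb{F}_q)$ at all --- is not proved.
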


An interesting consequence of this result is the following.

\begin{corollary*}
	Fix an odd prime $p$ and let $k$ denote a finite field of characteristic $p$ with $\#k>5$. Let $R$ be a complete noetherian local $k$-algebra with residue field $k$.
	Let $\overline{\rho}:G_{\mathbb{Q},Np}\rightarrow\mathrm{GL}_2(k)$ be a continuous representation with $\mathrm{Im}(\overline{\rho})=\mathrm{GL}^D_2(k)$, for some subgroup $D\subseteq k^\times$.
	Consider a deformation
	$\rho:G_{\mathbb{Q},Np}\rightarrow\mathrm{GL}_2(R)$
	of $\overline{\rho}$ with $\det(\rho)=\det(\overline{\rho})$, and assume that $R$ is topologically generated over $k$ by the traces of $\rho$. Then $\mathrm{Im}(\rho)\simeq \mathrm{GL}_2^D(R)$.
\end{corollary*}

This corollary also tells us about the images of Galois representations taking values in the mod $p$ Hecke algebras defined in \cite{BeKh15} and \cite{Deo17}

Another important application of the these representations is that they can give us information about some abelian extensions of number fields. The Galois representations that we investigate correspond to abelian extensions of very big non-solvable number fields that standard methods do not allow to treat computationally.

The structure of this paper is as follows. In Section \ref{sec_semidirect_product} we prove the image-splitting theorem (Theorem \ref{image}), a generalisation of Manoharmayum's Main Theorem in \cite{Ma15} that will allow to express $\mathrm{Im}(\rho_f)$ as a semidirect product of a certain submodule $M\subseteq\mathrm{M}_2^0(\mathfrak{m}_f)$ and $\mathrm{GL}_2^D(\mathbb{F}_q)$. 
In Section \ref{sec_theorem} we give a complete classification of the possible images of $2$-dimensional Galois representations with coefficients in local algebras over finite fields under the hypothesis previously mentioned (Theorem \ref{thm2}). 
These algebras are studied in detail and more generality in \cite{BeKh15} and \cite{Deo17}.
In Section \ref{sec_algorithm} we use this result in the situation where the Galois representation takes values in mod $p$ Hecke algebras coming from modular forms. The classification of the images appearing in this setting is completely solved in odd characteristic. In even characteristic we can only reduce the problem to a finite number of possibilities.
In Section \ref{sec_examples} we see how one can compute explicit examples in characteristic 2 and show how in each case one can have a conjectural image. We then state some questions that arise after contrasting many examples.
Finally in Section \ref{sec_application} we will see that the methods on group extensions that we develop allow  to deduce the existence of $p$-elementary abelian extensions of big number fields, which are not computationally approachable so far.

In \cite{Bel16} and \cite{CLM19} the authors prove a result along the lines of our image-splitting theorem \ref{image}, but considering all possible images for the residual representation $\overline{\rho}$ and in the more general setting of pseudo-representations (but excluding the case $p=2$).
Let $\Pi$ be a profinite group and $A$ a compact local ring with maximal ideal $\mathfrak{m}$ and residue field $\mathbb{F}=A/\mathfrak{m}$ a finite field of characteristic $p\neq2$.
In both papers \cite{Bel16} and \cite{CLM19} the key object of study is a certain \textit{Pink-Lie algebra} $L$ attached to $\mathrm{Im}(\rho)$, which the authors then describe depending on the different possible images of $\overline{\rho}$.
In \cite{Bel16}, the author considers two-dimensional pseudo-representations $(t,d)$ of $\Pi$ over $A$ and attaches to it a \textit{generalized matrix algebra} $R$ over $A$ and a representation $\rho:\Pi\rightarrow R^*$ with trace $t$ and determinant $d$. He proceeds to describe the image $\rho(\Pi)$ of the family $(t,d)$, obtaining a complete description depending on the projective image of the representation $\overline{\rho}$.
In particular, the author proves that the image of $\rho$ contains a nontrivial congruence subgroup of $\mathrm{SL}_2(B)$ for a certain subring $B$ of $A$. 

In \cite{CLM19}, the authors show that the ring $B$ can be slightly enlarged into an optimal ring $A_0$, and they describe it in terms of the \textit{conjugate self-twists} of $\rho$.
They prove, under mild conditions, that if $\overline{\rho}$ is either absolutely irreducible, or a sum of two distinct characters to $\mathbb{F}^\times$, then $\rho$ is $A_0$-full, i.e. that $\mathrm{Im}(\rho)$ contains a congruence subgroup $\Gamma_{A_0}(\mathfrak{a}):=\ker\left(\mathrm{SL}_2(A_0)\rightarrow\mathrm{SL}_2(A_0/\mathfrak{a})\right)$ for some nonzero ideal $\mathfrak{a}$ of $A_0$.
When the image of $\overline{\rho}$ is big, they are able to refine their main result using  Manoharmayum's Main Theorem in \cite{Ma15}.
In \cite{CLM19} Prop 6.7, the authors prove that, if $\mathrm{Im}(\overline{\rho})\supseteq\mathrm{SL}_2(\mathbb{E})$, where $\mathbb{E}$ denotes the residue field of $A_0$, then
$\mathrm{Im}(\rho)$ contains $\mathrm{SL}_2(A_1)$, where $A_1$ is an extension of certain trace-0 diagonal group $I$ by the Witt vectors $W(\mathbb{E})$ of $\mathbb{E}$.

\subsection*{Acknowledgements}
The author would like to thank Gabor Wiese for the guidance and support that led to the realisation of this work. A special thanks to Guillermo Mantilla for the helpful modules discussions. The author would also like to thank the referee for the careful reading and valuable comments, which helped to improve the manuscript.

\section{$\mathrm{Im}(\rho)$ as a semidirect product}\label{sec_semidirect_product}

We will start by proving the image-splitting theorem, a result that generalises the Main Theorem in \cite{Ma15}. As a consequence of this result, we will be able to express, under certain conditions, the image of $\rho_f$ as a semidirect product, a first step to completely determine $\mathrm{Im}(\rho_f)$.

In order to do this, we need to work in a more general situation. Let $(\mathbb{T},\mathfrak{m}_\mathbb{T})$ denote a finite-dimensional commutative local $\mathbb{F}_q$-algebra with residue field $\mathbb{T}/\mathfrak{m}_\mathbb{T}\simeq\mathbb{F}_q$ of characteristic $p$, and denote by $\pi:\mathbb{T}\rightarrow \mathbb{T}/\mathfrak{m}_\mathbb{T}$ the natural projection. Suppose that $\mathfrak{m}_\mathbb{T}^2=0$. Consider a closed subgroup $G\subseteq\mathrm{GL}_2^D(\mathbb{T})$, where $D\subseteq\mathbb{F}_q^\times$ denotes the subgroup where the determinants in $G$ lie. Suppose that $\pi(G)=\mathrm{GL}_2^D(\mathbb{F}_q)$. This gives a short exact sequence
$$1\rightarrow H\rightarrow G\rightarrow\mathrm{GL}_2^D(\mathbb{F}_q)\rightarrow1,$$
where $H$ is an $\mathbb{F}_p[\mathrm{GL}_2^D(\mathbb{F}_q)]$-module inside the abelian group of trace-0 matrices $\mathrm{M}_2^0(\mathfrak{m}_\mathbb{T})$ with coefficients in $\mathfrak{m}_\mathbb{T}\simeq\mathbb{F}_q^r$, for some $r\geq1$. The image-splitting theorem will allow us to describe the group $G$ as a semidirect product $H\rtimes\mathrm{GL}_2^D(\mathbb{F}_q)$. Equivalently, it tells us that the previous short exact sequence admits a split.

Let us introduce some notation necessary to state the image-splitting theorem.
Let $k$ be a finite field of characteristic $p$. Let $W(k)$ denote its ring of Witt vectors and denote by $T:k\rightarrow W(k)\subset\overline{\mathbb{Q}}_p$ the Teichm\"uller lift. Consider a complete local ring $(A,\mathfrak{m}_A)$ with residue field containing $k$ and consider the inclusion $\iota_0:k\rightarrow A/\mathfrak{m}_A$. Since $W(k)$ is a $p$-ring with residue field $k$, by the structure theorem for complete local rings (\cite{Ma86}, Theorem 29.2) we have that there exists a local homomorphism $\iota:W(k)\rightarrow A$ which induces $\iota_0$ on the residue fields. Thus we have a commutative diagram
$$\xymatrix{
	W(k) \ar@{-->}[r]^{\iota} & A  \\
	k\ar[r]_{\iota_0} \ar[u]_T & A/\mathfrak{m}_A  \ar[u]
	}$$
Denote by $W(k)_A$ the image of $\iota:W(k)\rightarrow A$. 
Consider a subgroup $D\subseteq k^\times$ and define the following group
$$\mathrm{GL}_n^D(W(k)):=\{g\in\mathrm{GL}_n(W(k))\mid\det(g)\in T(D)\}.$$
For any subring $X\subseteq A$, we define the group
$$\mathrm{GL}_n^D(X):=\{g\in\mathrm{GL}_n(X)\mid\det(g)\in \iota(T(D))\}.$$
We will prove the following result.

\begin{theorem}[Image-splitting theorem]\label{image}\label{main thm}
	Let $(A,\mathfrak{m}_A)$ be a complete local noetherian ring with maximal ideal $\mathfrak{m}_A$ and finite residue field $A/\mathfrak{m}_A$ of characteristic $p$. Let $\pi:A\rightarrow A/\mathfrak{m}_A$ denote the natural projection. Suppose we are given a subfield $k$ of $A/\mathfrak{m}_A$ and a closed subgroup $G$ of $\mathrm{GL}_n(A)$. Assume that the cardinality of $k$ is at least $4$ and that $k\neq\mathbb{F}_5$ if $n=2$ and $k\neq\mathbb{F}_4$ if $n=3$. Suppose that $\pi(G)\supseteq\mathrm{GL}_n^D(k)$. Then $G$ contains a conjugate of $\mathrm{GL}_n^D(W(k)_A)$.
\end{theorem}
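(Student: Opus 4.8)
The plan is to reduce the statement to a lifting/pro-$p$ argument. First I would filter the maximal ideal: since $A$ is complete local noetherian, it is the inverse limit of the Artinian quotients $A/\mathfrak{m}_A^N$, and $\mathrm{GL}_n(A)=\varprojlim\mathrm{GL}_n(A/\mathfrak{m}_A^N)$. So it suffices to handle each finite level $A/\mathfrak{m}_A^N$ and then take a limit of compatible conjugating elements (using compactness of $\mathrm{GL}_n(A)$ to extract a convergent sequence of conjugators, or working with the successive kernels directly). At each step we are dealing with a central extension
$$1\rightarrow\mathrm{M}_n(\mathfrak{m}_A^{N-1}/\mathfrak{m}_A^N)\rightarrow\mathrm{GL}_n(A/\mathfrak{m}_A^N)\rightarrow\mathrm{GL}_n(A/\mathfrak{m}_A^{N-1})\rightarrow1,$$
with kernel a finite abelian $p$-group. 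Thus the core of the argument is the case where $\mathfrak{m}_A^2=0$, i.e.\ a single such extension, and the kernel $H=G\cap(1+\mathrm{M}_n(\mathfrak{m}_A))$ sits inside $\mathrm{M}_n(\mathfrak{m}_A)$ as an $\mathbb{F}_p[\mathrm{GL}_n^D(k)]$-module.

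Next I would invoke the hypothesis $\pi(G)\supseteq\mathrm{GL}_n^D(k)$ together with what is known about $\mathrm{GL}_n^D(W(k))$ (equivalently $\mathrm{SL}_n(W(k))$, or $\mathrm{SL}_n(\mathbb{F}_q)$ at the bottom): the key input is that $\mathrm{SL}_n(k)$ is its own commutator subgroup and has \emph{no} nontrivial extension by its standard adjoint-type modules over $\mathbb{F}_p$ in the relevant range — this is exactly where the exclusions $|k|\geq4$, $k\neq\mathbb{F}_5$ for $n=2$, $k\neq\mathbb{F}_4$ for $n=3$ enter, since these are the cases where $H^1(\mathrm{SL}_n(k),\mathrm{ad})$ or the relevant $H^2$ fails to vanish (the Schur multiplier is exceptional precisely there). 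Concretely, I would lift a set of generators of $\mathrm{SL}_n(k)$ (elementary matrices) to $G$, using their Teichm\"uller representatives in $\mathrm{SL}_n(W(k)_A)$ as targets, and show that the relations among elementary matrices can be matched by adjusting the lifts by elements of $H$; the obstruction to doing so globally is a class in group cohomology that vanishes under the stated hypotheses. Then one upgrades from $\mathrm{SL}_n$ to $\mathrm{GL}_n^D$ by lifting a complement for the determinant torus, which is unobstructed because $T(D)$ is a subgroup of roots of unity of order prime to $p$, so the corresponding diagonal part lifts canonically via Teichm\"uller.

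Having produced inside $G$ a subgroup $G'$ that maps isomorphically onto $\mathrm{GL}_n^D(k)$ at the bottom level and whose entries lie in $W(k)_A$ at that level, I would iterate up the $\mathfrak{m}_A$-adic filtration: at each stage the obstruction to extending the already-constructed copy of $\mathrm{GL}_n^D(W(k)_A\bmod\mathfrak{m}_A^N)$ to the next level lives in $H^2(\mathrm{GL}_n^D(k),\mathrm{M}_n^0(\mathfrak{m}_A^{N-1}/\mathfrak{m}_A^N))$, which again vanishes by the same representation-theoretic facts (the module is a sum of copies of the adjoint representation, twisted by Frobenius), and the ambiguity of the lift is governed by the correspondingly-vanishing $H^1$, giving uniqueness up to conjugation by $1+\mathrm{M}_n(\mathfrak{m}_A^{N-1})$. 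Passing to the limit yields a closed subgroup of $G$ conjugate to $\mathrm{GL}_n^D(W(k)_A)$. The main obstacle — and the place where real care is needed — is the cohomology vanishing at the bottom level together with the bookkeeping of the $\mathbb{F}_p[\mathrm{GL}_n^D(k)]$-module structure of $\mathrm{M}_n^0(\mathfrak{m}_A)$: one must know not just that the relevant $H^1$ and $H^2$ vanish for $\mathrm{SL}_n(k)$ acting on the adjoint module in the admissible range, but that this persists for the full group $\mathrm{GL}_n^D(k)$ and for Frobenius-twisted direct sums, and that the excluded small fields are genuinely the only exceptions; this is precisely the technical heart inherited from Manoharmayum's argument in \cite{Ma15}, which I would adapt rather than reprove.
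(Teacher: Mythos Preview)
Your overall architecture --- pass to the artinian quotients $A/\mathfrak{m}_A^i$, handle one step at a time with $\mathfrak{m}_A\ker\pi=0$, take a compatible limit of conjugators --- matches the paper. The genuine gap is in the cohomological mechanism you invoke at the inductive step.

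You assert that the obstruction class in $H^2\bigl(\mathrm{GL}_n^D(k),\mathrm{M}_n^0(\mathfrak{m}_A^{N-1}/\mathfrak{m}_A^N)\bigr)$ \emph{vanishes}, so that a copy of $\mathrm{GL}_n^D(W(k)_A\bmod\mathfrak{m}_A^{N-1})$ lifts. It does not vanish. The extension
\[
1\longrightarrow \Gamma \longrightarrow \mathrm{GL}_n^D(W_{m+1})\longrightarrow \mathrm{GL}_n^D(W_m)\longrightarrow 1,\qquad \Gamma\cong \mathrm{M}_n^0(k),
\]
is \emph{non-split} for every $m\geq 1$ (this is Lemma~\ref{nonsplit}, proved by restricting to $\mathrm{SL}_n$). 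So whenever $W(k)_A$ actually grows from one level to the next, the relevant $H^2$ class is nonzero, and a ``cocycle vanishes, hence lift'' argument cannot work. In particular, in mixed characteristic your first step already fails: there is no subgroup of $\mathrm{GL}_n(A/\mathfrak{m}_A^2)$ mapping isomorphically onto $\mathrm{GL}_n^D(k)$, so you cannot ``lift generators to their Teichm\"uller targets and adjust by $H$''. (Note also that at level $m$ the coefficient group should be $\mathrm{GL}_n^D(W_m)$, not $\mathrm{GL}_n^D(k)$.)

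The paper's argument is comparative rather than obstruction-theoretic. At a single step $\pi:A\to B$ one has two extensions of $\mathrm{GL}_n^D(W_B)$: the target $\mathrm{GL}_n^D(W_A)$, with class $x\in H^2(\mathrm{GL}_n^D(W_B),\mathrm{M}_n^0(k))$, and the given subgroup $H$, with class $y\in H^2(\mathrm{GL}_n^D(W_B),\mathrm{M}_n^0(H))$. Both map to the same class in the ambient $H^2(\mathrm{GL}_n^D(W_B),\mathrm{M}_n^0(\ker\pi))$. The key input is not $H^2=0$ but the \emph{injectivity} of
\[
H^2\bigl(\mathrm{GL}_n^D(W_B),N\bigr)\hookrightarrow H^2\bigl(\mathrm{GL}_n^D(W_B),M\bigr)
\]
for $\mathbb{F}_p[\mathrm{GL}_n^D(W_B)]$-submodules $N\subseteq M\subseteq \mathrm{M}_n^0(k)^r$ (Lemma~\ref{H2}, Corollary~\ref{cor H2}); this forces $x$ and $y$ to come from a common class with coefficients in $\mathrm{M}_n^0(k)\cap \mathrm{M}_n^0(H)$. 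The nonsplitting of the Witt extension (and, when $p\mid n$, of its quotient by scalars) then rules out this intersection being small, yielding $\mathrm{M}_n^0(k)\subseteq \mathrm{M}_n^0(H)$ and hence $\mathrm{GL}_n^D(W_A)\subseteq uHu^{-1}$. The $H^1$-vanishing you mention is used, but only to identify the conjugating element via Manoharmayum's Proposition~2.2; it is $H^2$-injectivity together with nonsplitting, not $H^2$-vanishing, that carries the argument.
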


This result reduces to Manoharmayum's Main Theorem in (\cite{Ma15}) when restricted to the case $D=1$. More precisely, in \cite{Ma15} the author proves under the same hypothesis that, if $\pi(G)$ contains $\mathrm{SL}_n(k)$, then $G$ contains a conjugate of $\mathrm{SL}_n(W(k)_A)$.

\begin{proof}
	To begin with, we may assume without loss of generality that $\pi(G)=\mathrm{GL}_n^D(k)$. Indeed, consider $\overline{G}:=G\cap\pi^{-1}(\mathrm{GL}_n^D(k))$. Then $\pi(\overline{G})=\mathrm{GL}_n^D(k)$. If we see that $\overline{G}$ contains a conjugate of $\mathrm{GL}_n^D(W_A)$, then this conjugate also lies in $G$.
	Using the main result of Manoharmayum \cite{Ma15}, by conjugating $G$, we can from now on assume that $G$ itself contains $\mathrm{SL}_n(W(k)_A)$.
	
	Observe that the kernel of $\pi:\mathrm{GL}_n^D(A)\rightarrow\mathrm{GL}_n^D(k)$ is a closed pro-$p$ subgroup of $\mathrm{GL}_n^D(A)$. The following result is proved in \cite{wilson}, Proposition 2.3.3.
	
	\begin{lemma}[Profinite Schur-Zassenhaus theorem]\label{pro-finite}
		Let $H$ be a profinite group with  a normal closed pro-$p$ subgroup $P$ such that $H/P$ is finite of order prime to $p$. Then there exists a finite subgroup $A$ of $H$ that maps isomorphically to $H/P$, and any two such subgroups $A, A'$ of $H$ are conjugate by an element of $H$.
	\end{lemma}
	
	In what follows, we will call a group $A$ in $H$ as in the previous lemma a \emph{complement} of $P$.
	Let $T^D_n(k)$ be the set of diagonal matrices in $\mathrm{GL}_n(k)$ whose determinant lies in $D$. 	Clearly $T^D_n(k)$ lies in $\mathrm{GL}^D_n(k)$. Let $H^D=\pi^{-1}(T^D_n(k))$, which is a closed subgroup of $G$. Let $A^D$ be the set of diagonal matrices in $\mathrm{GL}_n^D(W(k)_A)$ whose entries are Teichm\"uller lifts of the elements in $T^D_n(k)$. When $D$ is the trivial group we just use the superscript $1$ instead of $\{1\}$.

	Let $P$ be the inverse image in $G$ under $\pi$ of the trivial group.
	Observe that $A^1$ lies in $G$ by our hypothesis that $G$ contains $\mathrm{SL}_n(W(k)_A)$. In particular, $A^1$ inside $H^1$ is a complement of $P$. If we can show that $A^D$ is contained in $G$, then the proof of Theorem \ref{main thm} is complete, since $\mathrm{GL}_n^D(W(k)_A)$ is generated by $A^D$ and $\mathrm{SL}_n(W(k)_A)$.
	This is proved in Lemma \ref{complement}.
\end{proof}

\begin{lemma}\label{complement}
	Under the previous notation, $A^D$ is contained in $H^D$.
\end{lemma}

\begin{proof}
 	Let $A_D$ in $H^D$ be a complement of $P$ according to Lemma \ref{pro-finite}. Let $A_1$ be the inverse image of $T^1_n(k)$ inside $A_D$, so that $A_1$ is the complement of $P$ in $H^1$. By Lemma \ref{pro-finite}, there exists $g\in H^1$ such that $A_1$ and $A^1$ are conjugate. Hence, after conjugating this by $g$, we can assume that $A_1=A^1$ and that $A_D$ contains $A^1$.
	Now $A_D$ is commutative, and hence it lies in the centralizer of $A^1$.
	
	Under the hypothesis on $k$ and $n$ in Theorem \ref{main thm}, one can see that the centralizer subgroup of $A^1$ is the set of diagonal matrices in $\mathrm{GL}_n^D(A)$, as the entries of the elements in $A^1$ are Teichm\"uller lifts. In particular, $A_D$ lies in this subgroup.
	
	Finally, for a diagonal matrix in $\mathrm{GL}_n^D(A)$, any entry has finite order prime to $p$ if and only if it is a Teichm\"uller lift of its reduction. It follows that $A_D=A^D$, and this completes the proof of the lemma.
\end{proof}

\begin{corollary}\label{cor}
	Let $k$ be a finite field of characteristic $p$ with cardinality at least $4$, $k\neq\mathbb{F}_5$ if $n=2$ and $k\neq\mathbb{F}_4$ if $n=3$.
	Let $(A,\mathfrak{m}_A)$ be a finite-dimensional commutative local $k$-algebra with residue field $k$ and $\mathfrak{m}_A^2=0$.
	Let $G\subseteq\mathrm{GL}^D_n(A)$ be a subgroup. Suppose that $G\mod\mathfrak{m}_A=\mathrm{GL}_n^D(k)$. Then there exists an $\mathbb{F}_p[\mathrm{GL}_n^D(k)]$-submodule $M\subseteq\mathrm{M}_n^0(\mathfrak{m}_A)$ such that $G$ is, up to conjugation by an element $u\in\mathrm{GL}_n(A)$ with $\pi(u)=1$, a $\mathrm{(}$non-twisted$\mathrm{)}$ semidirect product of the form
	$$G\simeq M\rtimes \mathrm{GL}_n^D(k).$$
\end{corollary}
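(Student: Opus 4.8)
The plan is to obtain Corollary \ref{cor} as an essentially formal consequence of Proposition \ref{prop} (equivalently, of the image-splitting theorem together with the control $\pi(u)=1$ that its proof provides). First I would observe that, since $A$ is a finite-dimensional $k$-algebra with $\mathfrak m_A^2=0$, it is artinian, hence a complete noetherian local ring lying in the category $\mathcal C$ attached to $k'=k$; the natural projection $\pi\colon A\to B:=A/\mathfrak m_A=k$ is then a surjection of artinian local rings in $\mathcal C$ with $\mathfrak m_A\ker\pi=\mathfrak m_A^2=0$ and $B\neq 0$. Because the canonical map $W(k)\to A$ kills $p$ and therefore factors through $W(k)/p=k$, its image $W(k)_A$ is precisely the copy of $k$ sitting inside $A$, and similarly $W(k)_B=k$; hence $\mathrm{GL}_n^D(W_B)=\mathrm{GL}_n^D(k)$, which by hypothesis equals $\pi(G)$. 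Applying Proposition \ref{prop} with $H=G$ then yields $u\in\mathrm{GL}_n(A)$ with $\pi(u)=1$ and $\mathrm{GL}_n^D(k)\subseteq G':=uGu^{-1}$. Since $\pi(G')=\pi(G)=\mathrm{GL}_n^D(k)$, one gets a short exact sequence $1\to H'\to G'\xrightarrow{\ \pi\ }\mathrm{GL}_n^D(k)\to 1$ with $H'=\ker(\pi|_{G'})=G'\cap(1+\mathrm{M}_n(\mathfrak m_A))$.

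Next I would pin down $H'$. Writing an element of $H'$ as $1+v$ with $v\in\mathrm{M}_n(\mathfrak m_A)$, the hypothesis $\mathfrak m_A^2=0$ gives $\det(1+v)=1+\mathrm{tr}(v)$, and since $G'\subseteq\mathrm{GL}_n^D(A)$ this lies in $D$; as the composite $k\hookrightarrow A\twoheadrightarrow A/\mathfrak m_A$ is the identity we have $(1+\mathfrak m_A)\cap D=\{1\}$, so $\mathrm{tr}(v)=0$, i.e.\ $v\in\mathrm{M}_n^0(\mathfrak m_A)$. Thus $H'=1+M$ where $M:=\{v\in\mathrm{M}_n^0(\mathfrak m_A):1+v\in G'\}$. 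Using $\mathfrak m_A^2=0$ once more, $v\mapsto 1+v$ is a group isomorphism $(\mathrm{M}_n^0(\mathfrak m_A),+)\xrightarrow{\sim}1+\mathrm{M}_n^0(\mathfrak m_A)$, so $M$ is an additive subgroup of $\mathrm{M}_n^0(\mathfrak m_A)$; since $\mathfrak m_A$ is a $k$-vector space this additive subgroup is automatically an $\mathbb F_p$-subspace, and because $\mathrm{GL}_n^D(k)\subseteq G'$ normalises $H'$ and conjugation corresponds, under $v\mapsto1+v$, to the matrix conjugation action, $M$ is an $\mathbb F_p[\mathrm{GL}_n^D(k)]$-submodule of $\mathrm{M}_n^0(\mathfrak m_A)$.

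Finally, the copy of $\mathrm{GL}_n^D(k)$ inside $G'$ produced by Proposition \ref{prop} maps isomorphically onto $\pi(G')=\mathrm{GL}_n^D(k)$ (again because $\pi$ is the identity on $k\subseteq A$), so the inclusion $\mathrm{GL}_n^D(k)\hookrightarrow G'$ is a genuine group-theoretic splitting of the short exact sequence above; a cardinality count (everything is finite, $A$ being a finite-dimensional algebra over the finite field $k$) then shows $G'=(1+M)\rtimes\mathrm{GL}_n^D(k)\simeq M\rtimes\mathrm{GL}_n^D(k)$ as a non-twisted semidirect product with the conjugation action, which is exactly the asserted description of $uGu^{-1}$. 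I do not expect a serious obstacle here: the substantive work is entirely carried by Proposition \ref{prop}, and the only points needing care are the identifications $W(k)_A=k=W(k)_B$ (so that the abstract group $\mathrm{GL}_n^D(W_A)$ really is the visible copy of $\mathrm{GL}_n^D(k)$ in $\mathrm{GL}_n(A)$) and the short determinant computation forcing the kernel $H'$ into the trace-zero matrices; it is also precisely to get $\pi(u)=1$ and an honest (non-twisted) splitting that one invokes Proposition \ref{prop} rather than only Theorem \ref{image}.
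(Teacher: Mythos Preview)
Your proof is correct and follows essentially the same route as the paper: both identify $W(k)_A=k$ to convert the image-splitting result (via Proposition~\ref{prop}) into the containment $\mathrm{GL}_n^D(k)\subseteq uGu^{-1}$ with $\pi(u)=1$, then use the determinant computation $\det(1+v)=1+\mathrm{tr}(v)$ from $\mathfrak m_A^2=0$ to force the kernel into $\mathrm{M}_n^0(\mathfrak m_A)$ and read off the non-twisted semidirect product. Your write-up is in fact a bit more explicit than the paper's (you spell out why $W(k)_A=k$, why $M$ is an $\mathbb F_p[\mathrm{GL}_n^D(k)]$-submodule, and close with a cardinality argument), but there is no substantive difference in approach.
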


\begin{proof}
	We are in the hypothesis of the image-splitting theorem, and since moreover we are assuming that $A$ is a $k$-algebra, we have that $W(k)_A=k$. So in this situation there exists $u\in\mathrm{GL}_n(A)$ such that $\pi(u)=1$ and $G\supseteq u^{-1}\mathrm{GL}_n^D(k)u$. Let $G':=uGu^{-1}\subseteq\mathrm{GL}_n^D(A)$. Denote by $\pi:A\rightarrow A/\mathfrak{m}_A\simeq k$ the natural projection and consider the following split short exact sequence:
	$$\begin{array}{ccccccccc}
	0 & \rightarrow & \mathrm{M}_n(\mathfrak{m}_A) & \rightarrow & \mathrm{GL}_n(A) & \stackrel{\pi}{\rightarrow} & \mathrm{GL}_n(k) & \rightarrow & 1 \\
	& & m & \mapsto & 1+m. \\
	\end{array}$$
	Take $m=(m_{ij})_{1\leq i,j\leq n}\in\mathrm{M}_n(\mathfrak{m}_A)$. An easy computation shows that $1+m\in\mathrm{GL}_n^D(A)$ if and only if $\mathrm{tr}(m)=0$:
	$$\det(1+m)=\det\left(\begin{smallmatrix}1+m_{11}&\ldots&m_{1n} \\ \vdots&&\vdots \\ m_{n1}&\ldots&1+m_{nn}\end{smallmatrix}\right)=1+\mathrm{tr}(m),$$
	as $\mathfrak{m}_A^2=0$, and $1+\mathrm{tr}(m)\in k$ if and only if $\mathrm{tr}(m)=0$. This gives a split short exact sequence
	$$\begin{array}{ccccccccc}
	0 & \rightarrow & \mathrm{M}^0_n(\mathfrak{m}_A) & \rightarrow & \mathrm{GL}^D_n(A) & \stackrel{\pi}{\rightarrow} & \mathrm{GL}^D_n(k) & \rightarrow & 1
	\end{array}$$
	and we have a commutative diagram
	$$\begin{array}{ccccccccc}
		0 & \rightarrow & \mathrm{M}_n^0(\ker\pi) & \rightarrow & \mathrm{GL}^D_n(A) & \stackrel{\pi}{\rightarrow} & \mathrm{GL}_n^D(k) & \rightarrow & 1 \\
		& & \cup & & \cup & & || \\
		0 & \rightarrow & M & \rightarrow & G' & \rightarrow & \mathrm{GL}_n^D(k) & \rightarrow & 1.
	\end{array}$$
	with $G'\supseteq\mathrm{GL}^D_n(k)$ and $M\subseteq\mathrm{M}_n^0(\ker\pi)$ a $\mathrm{GL}_n^D(k)$-submodule. So in particular, the second short exact sequence splits and $G'$ is of the form $G'=M\rtimes\mathrm{GL}_n^D(k)$.
\end{proof}

\section{Explicit description of $\mathrm{Im}(\rho)$}\label{sec_theorem}
We will study continuous odd Galois representations
$$\rho:G_\mathbb{Q}\rightarrow\mathrm{GL}_2(\mathbb{T})$$
where $(\mathbb{T},\mathfrak{m}_\mathbb{T})$ denotes a finite-dimensional local commutative algebra over a finite field $\mathbb{F}_q$ of characteristic $p$, equipped with the discrete topology and with $\mathbb{F}_q$ as residue field. We assume that $\mathbb{T}$ is generated by the traces of the image of $\rho$ and that $\mathfrak{m}_\mathbb{T}^2=0$.

We will use the image-splitting theorem to determine, under the hypothesis that $\rho$ has big residual image, the image of $\rho$. More concretely, if $\overline{\rho}:G_\mathbb{Q}\rightarrow\mathrm{GL}_2(\overline{\mathbb{F}}_p)$ denotes the corresponding residual Galois representation, we will assume that $\mathrm{Im}(\overline{\rho})=\mathrm{GL}_2^D(\mathbb{F}_q)$, where $D=\mathrm{Im}(\det(\overline{\rho}))\subseteq\mathbb{F}_q^\times$. Then we will show that the number $t$ of different traces in $\mathrm{Im}(\rho)$ and the dimension $m$ of $\mathfrak{m}_\mathbb{T}$ determine uniquely, up to isomorphism, the group $\mathrm{Im}(\rho)$.

\begin{theorem}\label{thm1}\label{thm2}
	Let $\mathbb{F}_q$ denote a finite field of characteristic $p$ and $q=p^d$ elements, and suppose that $q\neq2,3,5$. Let $(\mathbb{T},\mathfrak{m}_{\mathbb{T}})$ be a finite-dimensional local commutative $\mathbb{F}_q$-algebra equipped with the discrete topology, and with residue field $\mathbb{T}/\mathfrak{m}_\mathbb{T}\simeq\mathbb{F}_q$. Suppose that $\mathfrak{m}_\mathbb{T}^2=0$. Let $\Pi$ be a profinite group and let $\rho:\Pi\rightarrow\mathrm{GL}_2(\mathbb{T})$ be a continuous representation such that
	\begin{itemize}
		\item[$(a)$] $\mathrm{Im}(\overline{\rho})=\mathrm{GL}_2^D(\mathbb{F}_q)$, where $\overline{\rho}$ denotes the reduction $\rho\ \mathrm{mod}\ \mathfrak{m}_\mathbb{T}$ and $D:=\mathrm{Im}(\det\circ\overline{\rho})$,
		
		\item[$(b)$] $\mathrm{Im}(\rho)\subseteq\mathrm{GL}_2^D(\mathbb{T})$,
				
		\item[$(c)$] $\mathbb{T}$ is generated as $\mathbb{F}_q$-algebra by the set of traces of $\rho$.
	\end{itemize}
	Let $m:=\mathrm{dim}_{\mathbb{F}_q}\mathfrak{m}_\mathbb{T}$ and let $t$ be the number of different traces in $\mathrm{Im}(\rho)$. 
	
	\noindent If $p\neq2$, then $t=q^{m+1}$ and
	$$\mathrm{Im}(\rho)\simeq(\underbrace{\mathrm{M}_2^0(\mathbb{F}_q)\oplus\ldots\oplus\mathrm{M}_2^0(\mathbb{F}_q)}_m)\rtimes\mathrm{GL}_2^D(\mathbb{F}_q)\simeq\mathrm{GL}_2^D(\mathbb{T}).$$
	If $p=2$, then
	$t=q^\alpha\cdot((q-1)2^\beta+1),\mbox{ for some }0\leq \alpha\leq m\mbox{ and }0\leq\beta\leq d(m-\alpha)$,
	and in this case $\mathrm{Im}(\rho)\simeq M\rtimes\mathrm{GL}^D_2(\mathbb{F}_q)$, where $M$ is an $\mathbb{F}_2[\mathrm{GL}^D_2(\mathbb{F}_q)]$-submodule of $\mathrm{M}_2^0(\mathfrak{m}_\mathbb{T})$ of the form
	$$M\simeq\underbrace{\mathrm{M}_2^0(\mathbb{F}_q)\oplus{\ldots}\oplus\mathrm{M}_2^0(\mathbb{F}_q)}_{\alpha}\oplus\underbrace{C_2\oplus\cdots\oplus C_2}_{\beta},$$
	where $C_2\subseteq\mathbb{S}$ is a subgroup of order $2$ of the scalar matrices. Moreover, $M$ is determined uniquely by $t$ up to isomorphism.
\end{theorem}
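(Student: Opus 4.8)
The plan is to combine the image-splitting theorem (Corollary \ref{cor}, applied with $n=2$ and $k=\mathbb{F}_q$, which is legitimate since $q\neq 2,3,5$) with a careful bookkeeping of traces. By Corollary \ref{cor}, after conjugating by some $u\in\mathrm{GL}_2(\mathbb{T})$ with $\pi(u)=1$, we may write $\mathrm{Im}(\rho)\simeq M\rtimes\mathrm{GL}_2^D(\mathbb{F}_q)$ for some $\mathbb{F}_p[\mathrm{GL}_2^D(\mathbb{F}_q)]$-submodule $M\subseteq\mathrm{M}_2^0(\mathfrak{m}_\mathbb{T})$; conjugation does not change the number of distinct traces, so $t$ is computed from $M\rtimes\mathrm{GL}_2^D(\mathbb{F}_q)$. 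The module $\mathrm{M}_2^0(\mathfrak{m}_\mathbb{T})\simeq \mathrm{M}_2^0(\mathbb{F}_q)^{\oplus m}$ as $\mathbb{F}_p[\mathrm{GL}_2^D(\mathbb{F}_q)]$-module, and by Lemma \ref{modules} together with Lemma \ref{submodules} each submodule $M$ is a direct sum of copies of the simple constituents of $\mathrm{M}_2^0(\mathbb{F}_q)$. When $p\neq 2$ the module $\mathrm{M}_2^0(\mathbb{F}_q)$ is itself simple (since $\mathbb{S}=(0)$), so $M\simeq\mathrm{M}_2^0(\mathbb{F}_q)^{\oplus j}$ for some $0\leq j\leq m$; when $p=2$ we have the non-split sequence $0\to\mathbb{S}\to\mathrm{M}_2^0(\mathbb{F}_q)\to\mathbb{V}\to 0$ with $\mathbb{S}\simeq\mathbb{F}_q$ and $\mathbb{V}$ simple, so each summand contributes either a copy of $\mathrm{M}_2^0(\mathbb{F}_q)$ or a subspace of the scalar copies $\mathbb{S}$.

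Next I would compute $t$ in terms of $M$. An element of $M\rtimes\mathrm{GL}_2^D(\mathbb{F}_q)$ is a pair $(v,g)$ with $v\in M$, $g\in\mathrm{GL}_2^D(\mathbb{F}_q)$, sitting inside $\mathrm{GL}_2(\mathbb{T})$ as (up to conjugation) $g(1+v)$, whose trace is $\mathrm{tr}(g)+\mathrm{tr}(gv)$; write $\mathrm{tr}(g)=\bar t_0\in\mathbb{F}_q$ and note $\mathrm{tr}(gv)\in\mathfrak{m}_\mathbb{T}$. So two elements have the same trace iff their $g$-parts have equal trace \emph{and} the $\mathfrak{m}_\mathbb{T}$-parts $\mathrm{tr}(gv)$ agree. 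For fixed residual trace value $\tau\in\mathbb{F}_q$, I need to count $\#\{\mathrm{tr}(gv): g\in\mathrm{GL}_2^D(\mathbb{F}_q),\ \mathrm{tr}(g)=\tau,\ v\in M\}\subseteq\mathfrak{m}_\mathbb{T}$. The key computation is a pairing argument: fixing $g$ with $\mathrm{tr}(g)=\tau$, as $v$ ranges over $M$, $\mathrm{tr}(gv)$ ranges over the image of the $\mathbb{F}_q$-linear map $v\mapsto\mathrm{tr}(gv)$ restricted to $M$; using the trace form $\langle A,B\rangle=\mathrm{tr}(AB)$ on $\mathrm{M}_2^0$, which is non-degenerate when $p\neq 2$ and has radical $\mathbb{S}$ when $p=2$, I get that for $g\notin\mathbb{S}$ this image is all of the relevant component. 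So for $p\neq 2$: for each of the $q$ residual trace values, the $\mathfrak{m}_\mathbb{T}$-part sweeps out all of $\mathfrak{m}_\mathbb{T}\simeq\mathbb{F}_q^m$ (using that $g$ ranges over many matrices of that trace, and the projection of $M$ to the trace form is everything), giving $t=q\cdot q^m=q^{m+1}$; and since $M$ is forced to be maximal ($\mathrm{M}_2^0(\mathbb{F}_q)^{\oplus m}$) by the generation-by-traces hypothesis (c) — if some summand were omitted, $\mathbb{T}$ would not be generated by the traces — we conclude $\mathrm{Im}(\rho)\simeq\mathrm{M}_2^0(\mathbb{F}_q)^{\oplus m}\rtimes\mathrm{GL}_2^D(\mathbb{F}_q)\simeq\mathrm{GL}_2^D(\mathbb{T})$.

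The case $p=2$ is where the real work lies, and I expect it to be the main obstacle. Here $M\simeq\mathrm{M}_2^0(\mathbb{F}_q)^{\oplus\alpha}\oplus W$ with $W$ an $\mathbb{F}_2$-subspace of $\mathbb{S}^{\oplus(m-\alpha)}\simeq\mathbb{F}_q^{m-\alpha}$ of some $\mathbb{F}_2$-dimension, say $\beta$, so $0\leq\alpha\leq m$ and $0\leq\beta\leq d(m-\alpha)$; the constraint $\beta\leq d(m-\alpha)$ is automatic. For the trace count I split $\mathrm{GL}_2^D(\mathbb{F}_q)$ into scalar elements (those in $\mathbb{S}$, i.e. $g=\lambda\mathrm{Id}$, of which there are $|D\cap(\mathbb{F}_q^\times)^2|$-many with the correct determinant condition, but more simply: elements with $\mathrm{tr}(g)=0$ coming from the central involution versus the rest) and non-scalar elements. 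For non-scalar $g$ with $\mathrm{tr}(g)=\tau$, the trace-form image of the $\mathrm{M}_2^0(\mathbb{F}_q)^{\oplus\alpha}$-part is all of $\mathfrak{m}_\mathbb{T}$'s ``$\alpha$-part'' $\mathbb{F}_q^\alpha$, while the $W$-part contributes only scalars $\mathrm{tr}(g)\cdot(\text{scalar from }W)=\tau\cdot W$, which is $W$ if $\tau\neq 0$ and $\{0\}$ if $\tau=0$ (here using $\mathrm{tr}(g\cdot\mu\mathrm{Id})=\mu\,\mathrm{tr}(g)=\mu\tau$ in characteristic $2$). Counting: there are $q$ residual trace values; for the roughly $q-1$ of them that are the trace of a non-scalar element and are nonzero — one must check which $\tau\in\mathbb{F}_q$ occur as $\mathrm{tr}(g)$ for $g\in\mathrm{GL}_2^D(\mathbb{F}_q)$, and in characteristic $2$ the scalar matrices $\lambda\mathrm{Id}$ have trace $2\lambda=0$, so the only way to get residual trace $0$ from a scalar is trivially and all nonzero traces come from non-scalars — the $\mathfrak{m}_\mathbb{T}$-contribution has size $q^\alpha\cdot|W|=q^\alpha 2^\beta$; for $\tau=0$ the contribution from the scalar element is just $\{0\}$ and from non-scalar elements of trace $0$ it is $q^\alpha\cdot\{0\}=q^\alpha$ worth... — carefully assembling, $t=q^\alpha\big((q-1)2^\beta+1\big)$. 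Finally, for uniqueness: the function $(\alpha,\beta)\mapsto q^\alpha((q-1)2^\beta+1)$ must be shown injective on the relevant range; this follows from a $p$-adic valuation / size argument — write $t=q^\alpha+(q-1)q^\alpha 2^\beta$ and note $q^\alpha$ is the largest power of $q$ dividing... one extracts $\alpha$ and then $\beta$ uniquely by elementary estimates comparing $t$ with powers of $q$ — and since Lemma \ref{submodules} shows $M$ is determined up to isomorphism by the pair $(\alpha,\beta)$, the module $M$, hence $\mathrm{Im}(\rho)$, is determined up to isomorphism by $t$. The hardest single step is the precise orbit/trace-form analysis over $\mathbb{F}_q$ in characteristic $2$ — in particular verifying that for every nonzero $\tau$ the trace functional $v\mapsto\mathrm{tr}(gv)$ on the big summand really is surjective onto $\mathbb{F}_q$ as $g$ ranges over the (possibly restricted by $\det\in D$) non-scalar elements of trace $\tau$ — and I would handle it by an explicit choice of $g$ (e.g. a companion-type matrix) together with non-degeneracy of the trace form on $\mathrm{M}_2^0(\mathbb{F}_q)/\mathbb{S}$.
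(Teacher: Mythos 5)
Your overall strategy is the same as the paper's: apply Corollary \ref{cor} to split $\mathrm{Im}(\rho)$ as $M\rtimes\mathrm{GL}_2^D(\mathbb{F}_q)$ for a submodule $M\subseteq\mathrm{M}_2^0(\mathfrak{m}_\mathbb{T})\simeq\mathrm{M}_2^0(\mathbb{F}_q)^{\oplus m}$, classify the possible $M$, count traces of elements $(1+\mu)h$ via $\mathrm{tr}(h)+\mathrm{tr}(\mu h)$, use hypothesis $(c)$ to force $M$ maximal when $p\neq2$, and finish with an elementary parity/size argument for the uniqueness of $(\alpha,\beta)$. Your trace-form observation --- that $A\mapsto\mathrm{tr}(gA)$ is a nonzero, hence surjective, functional on $\mathrm{M}_2^0(\mathbb{F}_q)$ for every non-scalar $g$, and that every $\tau\in\mathbb{F}_q$ arises as the trace of a non-scalar companion matrix with determinant in $D$ --- is a clean justification of the surjectivity that the paper's product formula for $t$ uses only implicitly, and your count $t=q^\alpha((q-1)2^\beta+1)$ agrees with the paper's.

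The one genuine gap is the module-theoretic step in characteristic $2$. You cite Lemma \ref{submodules} to conclude that every $\mathbb{F}_2[\mathrm{GL}_2^D(\mathbb{F}_q)]$-submodule of $\mathrm{M}_2^0(\mathbb{F}_q)^{\oplus m}$ is, up to isomorphism, a direct sum of summands each equal to $\mathrm{M}_2^0(\mathbb{F}_q)$ or to an $\mathbb{F}_2$-subspace of $\mathbb{S}$. But Lemma \ref{submodules} applies only to semisimple modules, and in characteristic $2$ the module $\mathrm{M}_2^0(\mathbb{F}_q)$ is indecomposable and not semisimple (the sequence $0\rightarrow\mathbb{S}\rightarrow\mathrm{M}_2^0(\mathbb{F}_q)\rightarrow\mathbb{V}\rightarrow0$ does not split). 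A submodule of a direct sum of indecomposables does not in general decompose along the factors, so this assertion needs proof; the paper devotes a separate result to it (Lemma \ref{submodules2} in the appendix), whose proof uses Lemma \ref{modules} (every proper submodule of $\mathrm{M}_2^0(\mathbb{F}_q)$ lies in $\mathbb{S}$), Schur's lemma together with the triviality of the Brauer group of a finite field to realise the induced map on the simple top as a matrix over $\mathbb{F}_q$, and Gaussian elimination to put it in normal form. Without this step your list of possible $M$ in the $p=2$ case --- and hence the formula for $t$ and the uniqueness claim --- is not established. The remainder of your sketch, including the uniqueness argument extracting $\alpha$ and then $\beta$ from $t$, matches the paper's proof and goes through.
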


\begin{proof}
	By Cohen Structure Theorem (cf. \cite{Ei95} Theorem 7.7) and the assumption that $\mathfrak{m}_\mathbb{T}^2=0$, we have that $\mathbb{T}\simeq\mathbb{F}_q[X_1,\ldots,X_m]/(X_iX_j)_{1\leq i,j\leq m}$. Let $\pi:\mathbb{T}\rightarrow \mathbb{T}/\mathfrak{m}_\mathbb{T}\simeq\mathbb{F}_q$. Then $\ker\pi=\mathfrak{m}_\mathbb{T}$ is a $\mathbb{F}_q$-vector space of dimension $m$. Put $G:=\mathrm{Im}(\rho)\subseteq\mathrm{GL}^D_2(\mathbb{T})$. 
	We will separate the cases $p\neq2$ and $p=2$.
	
	By the assumptions on $\mathbb{F}_q$ we know (after Corollary \ref{cor}) that, if $p\neq2$, then $G\simeq M\rtimes\mathrm{GL}_2^D(\mathbb{F}_q)$ for some $\mathrm{GL}_2^D(\mathbb{F}_q)$-submodule $M\subseteq\mathrm{M}_2^0(\ker\pi)\simeq\underbrace{\mathrm{M}_2^0(\mathbb{F}_q)\oplus\ldots\oplus\mathrm{M}_2^0(\mathbb{F}_q)}_m$.
	Since $p\neq2$, we are in the case where $\mathrm{M}_2^0(\mathbb{F}_q)$ is a simple $\mathbb{F}_p[\mathrm{GL}_2(\mathbb{F}_q)]$-module, so by Lemma \ref{submodules} we know that $M$ is isomorphic to a direct sum of, a priori, $\alpha\leq m$ copies of $\mathrm{M}_2^0(\mathbb{F}_q)$. 

	Note that if we had $t=q^{\alpha+1}$, with $\alpha < m$, then we could assume, without loss of generality, that the set of traces of $G$ is $T:=\{a_0+a_1X_1+\ldots+a_\alpha X_\alpha\mid a_0,\ldots a_\alpha\in\mathbb{F}_q\}$ and by assumption, $\mathbb{T}=\langle T\rangle$. In particular,  $X_{\alpha+1}\in T$, so $X_{\alpha+1}$ would be a linear combination of $X_1,\ldots,X_\alpha$. On the other hand, we have an isomorphism
	$$\mathrm{GL}_2^D(\mathbb{T})\simeq(\underbrace{\mathrm{M}_2^0(\mathbb{F}_q)\oplus\ldots\oplus\mathrm{M}_2^0(\mathbb{F}_q)}_m)\rtimes\mathrm{GL}_2^D(\mathbb{F}_q),$$
	which gives a contradiction.

	Now suppose that $p=2$. By the assumptions on $\mathbb{F}_q$ we can use Corollary \ref{cor} and obtain that $G\simeq M\rtimes\mathrm{GL}^D_2(\mathbb{F}_q)$ for some $\mathbb{F}_2[\mathrm{GL}^D_2(\mathbb{F}_q)]$-submodule $M\subseteq\mathrm{M}_2^0(\ker\pi)\simeq\mathrm{M}_2^0(\mathbb{F}_q)^m$.
We can apply Lemma \ref{p2} and obtain that
	$$M\simeq N_1\oplus\ldots\oplus N_m,\quad\mbox{with }N_i\subseteq\mathrm{M}_2^0(\mathbb{F}_q),$$
	where each $N_i$ is either $\mathrm{M}_2^0(\mathbb{F}_q)$ or an $\mathbb{F}_2$-subspace of $\mathbb{S}$, for $1\leq i\leq m$. Since the indecomposable subspaces of $\mathbb{S}$ are of the form $C_2\simeq\{0, \lambda\}$ with $\lambda\in\mathbb{S}$, we have $\mathbb{S}\simeq C_2^d$, and
	$$M\simeq\underbrace{\mathrm{M}_2^0(\mathbb{F}_q)\oplus{\ldots}\oplus\mathrm{M}_2^0(\mathbb{F}_q)}_{\alpha}\oplus\underbrace{C_2\oplus\cdots\oplus C_2}_{\beta},\quad 0\leq\alpha\leq m,\ 0\leq\beta\leq d(m-\alpha).$$
	Consider the following split short exact sequence:
	\begin{footnotesize}
	$$\begin{array}{ccccccccc}
	0 & \rightarrow & \mathrm{M}_2^0(\ker\pi) & \stackrel{\iota}{\rightarrow} & \mathrm{GL}^D_2(\mathbb{T}) & \stackrel{\pi}{\rightarrow} & \mathrm{GL}^D_2(\mathbb{F}_q) & \rightarrow & 1 \\
	& & || & & || & & || \\
	0 & \rightarrow & \overbrace{\mathrm{M}_2^0(\mathbb{F}_q)\oplus\ldots\oplus\mathrm{M}_2^0(\mathbb{F}_q)}^m & \rightarrow & \mathrm{GL}^D_2(\mathbb{F}_q[X_1,\ldots,X_m]/(X_iX_j)_{1\leq i,j\leq m}) & \rightarrow & \mathrm{GL}^D_2(\mathbb{F}_q) & \rightarrow & 1 \\ \\
	& & \left(\left(\begin{smallmatrix}a_1&b_1\\c_1&a_1\end{smallmatrix}\right),\ldots, \left(\begin{smallmatrix}a_m&b_m\\c_m&a_m\end{smallmatrix}\right)\right) & \mapsto & 	\left(\begin{smallmatrix}1&0\\0&1\end{smallmatrix}\right)+\left(\begin{smallmatrix}a_1&b_1\\c_1&a_1\end{smallmatrix}\right)X_1+ \ldots +\left(\begin{smallmatrix}a_m&b_m\\c_m&a_m\end{smallmatrix}\right)X_m \\ \\
	& & & & \left(\begin{smallmatrix}a_0&b_0\\c_0&d_0\end{smallmatrix}\right) & \mapsfrom & \left(\begin{smallmatrix}a_0&b_0\\c_0&d_0\end{smallmatrix}\right)
	\end{array}$$
	\end{footnotesize}
	
	\noindent In this setting, after a possible reordering of the variables $X_i$, for an element $\mu\in M$ we have
	$$\iota(\mu)=1+\mu= 1+A_1X_1+\ldots+A_\alpha X_\alpha+ B_1 X_{\alpha+1}+\ldots+B_sX_m,$$
	where
	$$A_k\in N_k\simeq\mathrm{M}_2^0(\mathbb{F}_q),\mbox{ for }1\leq k\leq\alpha,$$
	$$B_k=\left(\begin{smallmatrix} b_k&0 \\ 0&b_k\end{smallmatrix}\right)\in N_{\alpha+k}\simeq\underbrace{C_2\oplus\ldots\oplus C_2}_{e_k}\mbox{ for } 1\leq k\leq s=m-\alpha,\mbox{ and }\sum_{k=1}^s e_k=\beta.$$
	We want to compute the number of different traces in $G$ depending on the module $M$. For an  element $g\in G$ we have $g=(1+\mu)h$ with $\mu\in M$ and $h\in\mathrm{GL}^D_2(\mathbb{F}_q)$, and
	$$\mathrm{tr}(g)=\mathrm{tr}((1+\mu)h)$$
	$$=\mathrm{tr}(h)+\mathrm{tr}(A_1h)X_1+\ldots+\mathrm{tr}(A_\alpha h)X_\alpha+\mathrm{tr}(B_1h)X_{\alpha+1}+\ldots+\mathrm{tr}(B_sh)X_m$$
	$$=\mathrm{tr}(h)+\mathrm{tr}(A_1h)X_1+\ldots+\mathrm{tr}(A_\alpha h)X_\alpha+\mathrm{tr}(h)b_1X_{\alpha+1}+\ldots\mathrm{tr}(h) b_s X_m$$
	$$=\mathrm{tr}(A_1h)X_1+\ldots+\mathrm{tr}(A_\alpha h)X_\alpha+\mathrm{tr}(h)(1+b_1 X_{\alpha+1}+\ldots+b_s X_m).$$
	Let $t$ denote the number of different traces in $G$. Then
	$$t=\#\left\{ \sum_{k=1}^\alpha\mathrm{tr}(A_kh)X_k +\mathrm{tr}(h)(1+\sum_{k=1}^sb_kX_{\alpha+k}) : A_k\in\mathrm{M}_2^0(\mathbb{F}_q), b_k\in C_2^{e_k},\ h\in\mathrm{GL}^D_2(\mathbb{F}_q) \right\}$$
	$$=\#\left\{ \sum_{k=1}^\alpha a_kX_k +\mathrm{tr}(h)(1+\sum_{k=1}^sb_kX_{\alpha+k}) : a_k\in\mathbb{F}_q, b_k\in C_2^{e_k},\ h\in\mathrm{GL}^D_2(\mathbb{F}_q) \right\}$$
	$$=\underbrace{\#\left\{ \sum_{k=1}^\alpha a_kX_k : a_k\in\mathbb{F}_q\right\}}_{t_1}\cdot
	\underbrace{\#\left\{\mathrm{tr}(h)(1+\sum_{k=1}^sb_kX_{\alpha+k}): b_k\in C_2^{e_k}, h\in\mathrm{GL}^D_2(\mathbb{F}_q)\right\}}_{t_2},$$
	where $t_1=q^\alpha$ and
	$$t_2= \#\{0\}+\#\{x\cdot(1+b_{1j_1}X_{\alpha+1}+\ldots+b_{sj_s} X_m):b_{kj_k}\in C_2^{e_k}, x\in\mathbb{F}_q^\times\}$$
	$$=1+(q-1)\cdot 2^{e_1}\cdot\ldots\cdot 2^{e_s}=1+(q-1)\cdot 2^{\sum e_k}=1+(q-1)2^\beta.$$
	So we obtain the formula
	$ t=q^\alpha\cdot(1+(q-1)2^\beta)$.
	
	Finally, let us prove that the number of traces determines the module $M$ up to isomorphism. Let $t'=q^{\alpha'}\cdot(1+(q-1)2^{\beta'})$. Let us check that if $t=t'$ then one has $\alpha=\alpha'$ and $\beta=\beta'$. Recall that $q=2^d$. Suppose that $\alpha\geq\alpha'$. Then
	$$q^\alpha(1+(q-1)2^\beta)=q^{\alpha'}(1+(q-1)2^{\beta'})\Leftrightarrow \underbrace{q^{\alpha-\alpha'}(1+(q-1)2^\beta)}_{(L)}=\underbrace{1+(q-1)2^{\beta'}}_{(R)}.$$
	If $\beta'>0$ then $(R)\equiv1$ mod 2, so $\alpha=\alpha'$ and then $\beta=\beta'$.
	If $\beta'=0$ then $(R)=q$. Since $1+(q-1)2^\beta\geq q$ and $q^{\alpha-\alpha'}\geq1$, we necessarily have $\beta=0=\beta'$ and $\alpha=\alpha'$.
\end{proof}

\begin{remark}
	Theorem $\ref{thm1}$ shows that if the field cut out by $\overline{\rho}$ admits some abelian extension $\mathrm{(}$of the type we are considering$\mathrm{)}$, then it is a ``big one". So the cases $m\geq2$ will occur rarely.
\end{remark}

To complete this section we will use the following result to prove an interesting consequence of Theorem \ref{thm1} about the images of Galois representations taking values in mod $p$ Hecke algebras described in \cite{BeKh15} and \cite{Deo17}.

\begin{proposition}[\cite{Bos86}, Prop. 2]\label{boston}
	Let $R$ be a complete noetherian local ring with maximal ideal $\mathfrak{m}$ such that $R/\mathfrak{m}$ is finite and has characteristic $p\neq2$.
	Let $H$ be a closed subgroup of $\mathrm{SL}_n(R)$ projecting onto $\mathrm{SL}_n(R/\mathfrak{m}^2)$. Then $H=\mathrm{SL}_n(R)$.
\end{proposition}

Let $G_{\mathbb{Q},Np}$ denote the Galois group of a maximal algebraic extension of $\mathbb{Q}$ unramified outside $Np$ and $\infty$.
For background on deformation of representations see \cite{mazur1}, \cite{mazur2}.

\begin{corollary}\label{cor3.4}
	Fix an odd prime $p$ and let $k$ denote a finite field of characteristic $p$ with $\#k>5$. Let $R$ be a complete noetherian local $k$-algebra with residue field $k$.
	Let $\overline{\rho}:G_{\mathbb{Q},Np}\rightarrow\mathrm{GL}_2(k)$ be a continuous representation with $\mathrm{Im}(\overline{\rho})=\mathrm{GL}^D_2(k)$, for some subgroup $D\subseteq k^\times$.
	Consider a deformation
	$$\rho:G_{\mathbb{Q},Np}\rightarrow\mathrm{GL}_2(R)$$
	of $\overline{\rho}$ with $\det(\rho)=\det(\overline{\rho})$, and assume that $R$ is topologically generated over $k$ by the traces of $\rho$. Then $\mathrm{Im}(\rho)\simeq \mathrm{GL}_2^D(R)$.
\end{corollary}

\begin{proof}
	Let $\mathfrak{m}$ denote the maximal ideal of $R$ and consider the representation $\rho':G_{\mathbb{Q},Np}\rightarrow\mathrm{GL}_2(R/\mathfrak{m}^2)$, which just consists of composing $\rho$ with $R/\mathfrak{m}^2$.
	We can apply theorem \ref{thm1} to the algebra $\mathbb{T}=R/\mathfrak{m}^2$ and get that
	$\mathrm{Im}(\rho')\supset\mathrm{SL}_2(R/\mathfrak{m}^2)$.
	Then by Proposition \ref{boston}, $\mathrm{Im}(\rho)\supset\mathrm{SL}_2(R)$. 
	Using that $\det(\rho)=\det(\overline{\rho})$ and the fact that $\mathrm{GL}^D_2(R)$ is generated by $\mathrm{SL}_2(R)$ and the set of diagonal matrices whose determinant lies in $D$,
	we obtain that $\mathrm{Im}(\rho)\simeq\mathrm{GL}_2^D(R)$.
	
\end{proof}

Let $A_k$ be the algebra of endomorphisms of $S_{\leq k}(N;\mathbb{F}_q)=\sum_{i=0}^kS_k(N;\mathbb{F}_q)$ generated by the Hecke operators $T_\ell$ and $S_\ell$ for all $\ell\nmid Np$ and
$A:=\varprojlim A_k$. It is well-known that the algebra $A$ is a complete semi-local ring whose local components correspond bijectively to isomorphism classes of modular Galois representations $\overline{\rho}:G_{\mathbb{Q},Np}\rightarrow\mathrm{GL}_2(\mathbb{F}_q)$.
In \cite{BeKh15} (for level $N=1$) and later in \cite{Deo17} (for level $N\geq1$), the authors study the structure of the local components of $A$.
If $\overline\rho$ is a modular representation satisfying the hypothesis of corollary \ref{cor3.4}, then its deformation to the local component $A_{\overline\rho}$ (given in \cite{Deo17} and \cite{BeKh15}) satisfies the hypotheses of the corollary \ref{cor3.4}.

\section{Application: Computation of images of Galois representations with values in mod $p$ Hecke algebras}\label{sec_algorithm}

In order to apply Theorem \ref{thm2} to Galois representations coming from modular forms, let us recall some notation. Fix a level $N\geq1$, a weight $k\geq2$, a prime $p\nmid N$ and a Dirichlet character $\varepsilon:(\mathbb{Z}/N\mathbb{Z})^\times\rightarrow\overline{\mathbb{F}}_p$. Let $\overline{\mathbb{T}}$ denote the Hecke algebra of $S_k(N,\varepsilon;\mathbb{F}_q)$ generated by the \textit{good} operators $T_\ell$, i.e. those with $\ell\nmid Np$. For a normalised Hecke eigenform $f(z)=\sum_{n=0}^\infty a_n(f)q^n\in S_k(N,\varepsilon;\overline{\mathbb{F}}_p)$, whose coefficients generate the field $\mathbb{F}_q$, let $\mathfrak{m}_f:=\ker\lambda_f$ denote the maximal ideal of $\overline{\mathbb{T}}$ given by the ring homomorphism
$\lambda_f:\overline{\mathbb{T}}\rightarrow\mathbb{F}_q,\ T_n\mapsto a_n(f)$.
Assume that $\mathfrak{m}_f^2=0$.
Consider the local algebra $\mathbb{T}_f:=\overline{\mathbb{T}}_{\mathfrak{m}_f}$, which we will refer to as the \textit{local mod $p$ Hecke algebra} associated to the mod $p$ modular form $f$. Let 
$$\rho_f:G_\mathbb{Q}\rightarrow\mathrm{GL}_2(\mathbb{T}_f)$$ 
be the Galois representation attached to $\mathbb{T}_f$ and let $\overline{\rho}_f:G_\mathbb{Q}\rightarrow\mathrm{GL}_2(\overline{\mathbb{F}}_p)$ denote its reduction modulo the maximal ideal. Assume that $\mathrm{Im}(\overline{\rho})=\mathrm{GL}_2^D(\mathbb{F}_q)$, where $D=\mathrm{Im}(\det\circ\overline{\rho}_f)$. Let $m:=\mathrm{dim}_{\mathbb{F}_q}\mathfrak{m}_f$ and let $t$ denote the number of different traces in $\mathrm{Im}(\rho_f)$. 
Then, using the previous theorem, one can easily deduce that, if $p\neq2$, then $t=q^{m+1}$ and
$$\mathrm{Im}(\rho_f)\simeq(\mathrm{M}_2^0(\mathbb{F}_q)\oplus\ldots\oplus\mathrm{M}_2^0(\mathbb{F}_q))\rtimes\mathrm{GL}_2^D(\mathbb{F}_q).$$
If $p=2$, then $t=q^\alpha\cdot((q-1)2^\beta+1)$, for some $0\leq\alpha\leq m$ and $0\leq\beta\leq d(m-\alpha)$, and in this case
$$\mathrm{Im}(\rho_f)\simeq(\underbrace{\mathrm{M}_2^0(\mathbb{F}_q)\oplus\ldots\oplus\mathrm{M}_2^0(\mathbb{F}_q)}_\alpha\oplus\underbrace{C_2\oplus\ldots\oplus C_2}_\beta)\rtimes\mathrm{GL}_2^D(\mathbb{F}_q),$$
where $C_2\subseteq\mathbb{S}$ is a subgroup of 2 elements.

As one can see, in characteristic 2 the result does not allow to completely determine the image of $\rho_f$.
Using the packages \verb+HeckeAlgebras+ and \verb+ArtinAlgebras+ (which can be found in \cite{WieseWeb}) implemented in Magma, one can compute these algebras, and try to determine the image in concrete cases.

The idea is to compute, for a fixed level, weight and character, all local Hecke algebras on the corresponding space. For each one of them, after ensuring that the conditions of theorem \ref{thm1} are satisfied, one just has to compute Hecke operators up to certain bound, and count the number of different operators that appear. If one computes far enough, this number can give a candidate for the image of the corresponding Galois representation. We will see explicit examples of this procedure in the following section.

\section{Examples in characteristic $2$}\label{sec_examples}

In \cite{thesisLaia} many examples for $p=2$ are listed for $m=1,2$ and $3$. In this setting, since there are several possibilities for the image of $\rho_f$ (which we can distinguish by the number of different traces $\widetilde{t}$ computed), we can only ``guess" the group that we obtain.
However, we will see that the possible numbers of traces that can be obtained in each case are integers enough separated so that it is very likely that we can guess the right number of traces.
Here we will summarise the results obtained in \cite{thesisLaia}. We take $1\leq N\leq 1500$ and $k=2,3$.

\subsubsection*{Examples with $m=1$}
	Here we have
	$t=q^\alpha\cdot((q-1)2^\beta+1)$,  with $0\leq\alpha\leq1$ and $0\leq\beta\leq d(1-\alpha)$. The possible numbers of traces in this setting are summarised in table \ref{traces1}.
	
		\begin{table}[H]
		\centering
		\captionsetup{font=footnotesize}
		\footnotesize{
		\begin{tabular}{cc|c|c|c|} \cline{3-5}
			\multicolumn{2}{c|}{\multirow{2}{*}{$\mathbb{F}_{2^2}$} } & \multicolumn{3}{|c|}{$\beta$} \\  \cline{3-5} 
			 & & 0 & 1 & 2  \\ \cline{1-5}
			\multicolumn{1}{|c|}{\multirow{2}{*}{$\alpha$}} & \multicolumn{1}{|c||}{0} & \textbf{4} & \textbf{7} & \textbf{13} \\ \cline{2-5} 
			\multicolumn{1}{|c|}{} & \multicolumn{1}{|c||}{1} & \textbf{16} & - & - \\ \hline
		\end{tabular}
		\quad
		\begin{tabular}{cc|c|c|c|c|} \cline{3-6}
			\multicolumn{2}{c|}{\multirow{2}{*}{$\mathbb{F}_{2^3}$} } & \multicolumn{4}{|c|}{$\beta$} \\  \cline{3-6} 
			 & & 0 & 1 & 2 & 3 \\ \cline{1-6}
			\multicolumn{1}{|c|}{\multirow{2}{*}{$\alpha$}} & \multicolumn{1}{|c||}{0} & \textbf{8} & \textbf{15} & \textbf{29} & \textbf{57} \\ \cline{2-6} 
			\multicolumn{1}{|c|}{} & \multicolumn{1}{|c||}{1} & \textbf{64} & - & - & - \\ \hline
		\end{tabular}
		\quad
		\begin{tabular}{cc|c|c|c|c|c|} \cline{3-7}
			\multicolumn{2}{c|}{\multirow{2}{*}{$\mathbb{F}_{2^4}$} } & \multicolumn{5}{|c|}{$\beta$} \\  \cline{3-7} 
	 		& & 0 & 1 & 2 & 3 & 4 \\ \cline{1-7}
			\multicolumn{1}{|c|}{\multirow{2}{*}{$\alpha$}} & \multicolumn{1}{|c||}{0} & \textbf{16} & \textbf{31} & \textbf{61} & \textbf{121} & \textbf{241} \\ \cline{2-7} 
			\multicolumn{1}{|c|}{} & \multicolumn{1}{|c||}{1} & \textbf{256} & - & - & - & - \\ \hline
		\end{tabular}
		}
		\caption{Possible number of traces when $m=1$.}\label{traces1}
	\end{table}
	
	Let us describe a concrete example in full detail.
	The first example that we find with $m=1$ and degree $d=2$ is in level $N=67$. In this case, by computing Hecke operators up to $b=1000$, we find $\widetilde{t}=7$. As shown in Table \ref{traces1}, we have the following possibilities for $t$: 4,7,13 or 16. So we can only exclude $t=4$. Let $T^{(1)},\ldots,T^{(7)}$ denote the seven operators that we find. If we take a closer look to them, we observe the following multiplicities for each one:
		$$21\times T^{(1)},\ 31\times T^{(2)},\ 15\times T^{(3)},\ 14\times T^{(4)},\ 39\times T^{(5)},\ 16\times T^{(6)},\ 30\times T^{(7)}.$$
	So if the theoretical number of traces were $t=13$ or 16, it would mean that there are still at least 6 Hecke operators left to appear, which seems highly unlikely.
	
	Just to be even more confident that $t=7$, if we compute up to $b=5000$, we still find $\widetilde{t}=7$ and the following multiplicities:
	$$58\times T^{(1)},\ 114\times T^{(2)},\ 69\times T^{(3)},\ 67\times T^{(4)},\ 185\times T^{(5)},\ 63\times T^{(6)},\ 111\times T^{(7)}.$$
	This means that the group $G$ in this case would be $G\simeq C_2\times\mathrm{SL}_2(\mathbb{F}_4)$.

\subsubsection*{Examples with $m=2$}
	Here we have
	$t=q^\alpha\cdot((q-1)2^\beta+1)$, with $0\leq\alpha\leq2$ and $0\leq\beta\leq d(2-\alpha)$. The possible numbers of traces in this setting are summarised in Table \ref{traces2}.
	
	\begin{table}[H]
	\centering
	\captionsetup{font=footnotesize}
	\footnotesize{
	\noindent \begin{tabular}{cc|c|c|c|c|c|c|} \cline{3-7}
		\multicolumn{2}{c|}{\multirow{2}{*}{$\mathbb{F}_{2^2}$} } & \multicolumn{5}{|c|}{$\beta$} \\  \cline{3-7} 
		 & & 0 & 1 & 2 & 3 & 4 \\ \cline{1-7} 
		\multicolumn{1}{|c|}{\multirow{3}{*}{$\alpha$}} & \multicolumn{1}{|c||}{0} & \textbf{4} & \textbf{7} & \textbf{13} & \textbf{25} & \textbf{49} \\ \cline{2-7} 
		\multicolumn{1}{|c|}{} & \multicolumn{1}{|c||}{1} & \textbf{16} & \textbf{28} & \textbf{52} & - & -  \\ \cline{2-7}
		\multicolumn{1}{|c|}{} & \multicolumn{1}{|c||}{2} & \textbf{64}& - & - & - & -  \\ \hline
	\end{tabular}
	\quad\quad
	\noindent \begin{tabular}{cc|c|c|c|c|c|c|c|c|} \cline{3-9}
		\multicolumn{2}{c|}{\multirow{2}{*}{$\mathbb{F}_{2^3}$} } & \multicolumn{7}{|c|}{$\beta$} \\  \cline{3-9} 
		 & & 0 & 1 & 2 & 3 & 4 & 5 & 6 \\ \cline{1-9} 
		\multicolumn{1}{|c|}{\multirow{3}{*}{$\alpha$}} & \multicolumn{1}{|c||}{0} & \textbf{8} & \textbf{15} & \textbf{29} & \textbf{57} & \textbf{113} & \textbf{225} & \textbf{449} \\ \cline{2-9} 
		\multicolumn{1}{|c|}{} & \multicolumn{1}{|c||}{1} & \textbf{64} & \textbf{120} & \textbf{232} & \textbf{456} & - & - & - \\ \cline{2-9}
		\multicolumn{1}{|c|}{} & \multicolumn{1}{|c||}{2} & \textbf{512}& - & - & - & - & - & - \\ \hline
	\end{tabular}
	
	\begin{tabular}{cc|c|c|c|c|c|c|c|c|c|} \cline{3-11}
		\multicolumn{2}{c|}{\multirow{2}{*}{$\mathbb{F}_{2^4}$} } & \multicolumn{9}{|c|}{$\beta$} \\  \cline{3-11} 
		 & & 0 & 1 & 2 & 3 & 4 & 5 & 6 & 7 & 8 \\ \cline{1-11}
		\multicolumn{1}{|c|}{\multirow{3}{*}{$\alpha$}} & \multicolumn{1}{|c||}{0} & \textbf{16} & \textbf{31} & \textbf{61} & \textbf{121} & \textbf{241} & \textbf{481} & \textbf{916} & \textbf{1921} & \textbf{3841} \\ \cline{2-11} 
		\multicolumn{1}{|c|}{} & \multicolumn{1}{|c||}{1} & \textbf{256} & \textbf{496} & \textbf{976} & \textbf{1936} & \textbf{3856} & - & - & - & - \\ \cline{2-11} 
		\multicolumn{1}{|c|}{} & \multicolumn{1}{|c||}{2} & \textbf{4096} & - & - & - & - & - & - & - & -   \\ \hline
	\end{tabular}
	}
	\caption{Possible number of traces when $m=2$.}\label{traces2}
	\end{table}

\subsubsection*{Examples with $m=3$}
	Here we have $t=q^\alpha\cdot((q-1)2^\beta+1)$, with $0\leq\alpha\leq3$ and $0\leq\beta\leq d(3-\alpha)$.  The possible numbers of traces in this setting are summarised in Table \ref{traces3}.
	
\begin{table}[htp]
\centering
\captionsetup{font=footnotesize}
\footnotesize{
\begin{tabular}{cc|c|c|c|c|c|c|c|c|} \cline{3-9}
	\multicolumn{2}{c|}{\multirow{2}{*}{$\mathbb{F}_{2^2}$} } & \multicolumn{7}{|c|}{$\beta$} \\  \cline{3-9} 
	 & & 0 & 1 & 2 & 3 & 4 & 5 & 6 \\ \cline{1-9} 
	\multicolumn{1}{|c|}{\multirow{4}{*}{$\alpha$}} & \multicolumn{1}{|c||}{0} & \textbf{4} & \textbf{7} & \textbf{13} & \textbf{25} & \textbf{49} & \textbf{97} & \textbf{193} \\ \cline{2-9} 
	\multicolumn{1}{|c|}{} & \multicolumn{1}{|c||}{1} & \textbf{16} &\textbf{28} & \textbf{52} & \textbf{100} & \textbf{196} & - & - \\ \cline{2-9}
	\multicolumn{1}{|c|}{} & \multicolumn{1}{|c||}{2} & \textbf{64} & \textbf{112} & \textbf{208} & - & - & - & - \\ \cline{2-9}
	\multicolumn{1}{|c|}{} & \multicolumn{1}{|c||}{3} & \textbf{256} & - & - & - & - & - & - \\ \hline
\end{tabular}

\begin{tabular}{cc|c|c|c|c|c|c|c|c|c|c|} \cline{3-12}
	\multicolumn{2}{c|}{\multirow{2}{*}{$\mathbb{F}_{2^3}$} } & \multicolumn{10}{|c|}{$\beta$} \\  \cline{3-12}
	& & 0 & 1 & 2 & 3 & 4 & 5 & 6 & 7 & 8 & 9 \\ \cline{1-12}
	\multicolumn{1}{|c|}{\multirow{4}{*}{$\alpha$}} & 0 & \textbf{8} & \textbf{15} & \textbf{29} & \textbf{57} & \textbf{113} & \textbf{225} & \textbf{449} & \textbf{897} & \textbf{1793} & \textbf{3585} \\  \cline{2-12} 
	\multicolumn{1}{|c|}{} & 1 & \textbf{64} & \textbf{120} & \textbf{232} & \textbf{456} & \textbf{904} & \textbf{1800} & \textbf{3592} & - & - & - \\  \cline{2-12}
	\multicolumn{1}{|c|}{} & 2 & \textbf{512} & \textbf{960} & \textbf{1856} & \textbf{3648} & - & - & - & - & - & - \\ \cline{2-12}
	\multicolumn{1}{|c|}{} & 3 & \textbf{4096} & - & - & - & - & - & - & - & - & - \\ \hline
\end{tabular}	

\begin{tabular}{cc|c|c|c|c|c|c|c|c|c|} \cline{3-11}
	\multicolumn{2}{c|}{\multirow{2}{*}{$\mathbb{F}_{2^4}$} } & \multicolumn{9}{|c|}{$\beta$} \\  \cline{3-11}
	& & 0 & 1 & 2 & 3 & 4 & 5 & 6 & 7 & 8 \\ \cline{1-11}
	\multicolumn{1}{|c|}{\multirow{4}{*}{$\alpha$}}  & 0 & \textbf{16} & \textbf{31} & \textbf{61} & \textbf{121} & \textbf{241} & \textbf{481} & \textbf{961} & \textbf{1921} & \textbf{3841} \\ \cline{2-11}
	\multicolumn{1}{|c|}{} & 1 & \textbf{256} & \textbf{496} & \textbf{976} & \textbf{1936} & \textbf{3856} & \textbf{7969 }& \textbf{15376} & \textbf{30736} & \textbf{61456} \\ \cline{2-11}
	\multicolumn{1}{|c|}{} & 2 & \textbf{4096} & \textbf{7936} & \textbf{15616} & \textbf{30976} & \textbf{61696} & - & - & - & - \\ \cline{2-11}
	\multicolumn{1}{|c|}{} & 3 & \textbf{65536} & - & - & - & - & - & - & - & -   \\ \hline
	\multicolumn{2}{c|}{} & \multicolumn{4}{|c|}{$\beta$} & \multicolumn{5}{|c}{} \\  \cline{3-6}
	& & 9 & 10 & 11 & 12 & \multicolumn{5}{|c}{} \\	\cline{1-6}
	\multicolumn{1}{|c|}{$\alpha$} & 0 & \textbf{7681} & \textbf{15361} & \textbf{30721} & \textbf{61441} & \multicolumn{5}{|c}{} \\	\cline{1-6}
\end{tabular}
}
\caption{Possible number of traces when $m=3$.}\label{traces3}
\end{table}

In Table \ref{summary}, we summarise all the examples that we have computed. We separate the cases by the dimension $m$ of the maximal ideal $\mathfrak{m}_f/\mathfrak{m}_f^2$, the degree $d$ of the finite field $\mathbb{F}_q$, and the weight $k$. We run through levels $1\leq N\leq 1500$ and count the number of different examples that we obtain depending on the number of different traces that we find. When we find 2 examples that come form the same modular form (i.e. when we find two examples whose operators are exactly the same) then we denote this in the table by $2\times$.

\begin{table}[H]
\centering
\captionsetup{font=small}
\begin{tabular}{|c|c|c|c|c|c|c|c|} \hline
		$m$ & $d$ & $k$ & $t$ & $M$ & $N=p_1$ & $N=p_1p_2$ & $N=p_1p_2p_3$ \\ \hline\hline
		1 & 2 & 2 & $7$ & $C_2$ & 58 & $2\times15$ & \\ \cline{2-8}
		 & 3 & 2 & 15 & $C_2$ & 32 & $2\times9$ & \\ \cline{2-8}
		 & 4 & 2 & 31 & $C_2$ & 42 & $2\times19$ & \\ \hline\hline
		 
		2 & 2 & 2 & $13$ & $C_2\oplus C_2$ & 1 & 48 & $2\times7$ \\ \cline{2-8}
		& 2 & 3 & $13$ & $C_2\oplus C_2$ & 18 & $2\times3$ & \\ \cline{2-8}
		& 3 & 2 & 29 & $C_2\oplus C_2$ &  & 68 & $2\times8$ \\ \cline{2-8}
		& 3 & 3 & 29 & $C_2\oplus C_2$ & 7 & $2\times11$ &  \\ \cline{2-8}
		& 4 & 2 & 61 & $C_2\oplus C_2$ & & 41 & $2\times2$ \\ \cline{2-8}
		& 4 & 3 & 61 & $C_2\oplus C_2$ & 1 & & \\ \hline\hline
		2 & 2 & 3 & $25$ & $C_2\oplus C_2\oplus C_2$ & 20 & $2\times8$ & \\ \cline{2-8}
		& 3 & 3 & 57 & $C_2\oplus C_2\oplus C_2$ & 29 & $2\times11$ & \\ \cline{2-8}
		& 4 & 3 & 121 & $C_2\oplus C_2\oplus C_2$ & 34 & $2\times7$ & \\ \hline\hline
		2 & 2 & 2 & $28$ & $\mathrm{M}_2^0(\mathbb{F}_q)\oplus C_2$ & 2 &  & \\ \cline{2-8}
		& 3 & 2 & 120 & $\mathrm{M}_2^0(\mathbb{F}_q)\oplus C_2$ & 6 & & \\ \hline\hline

		3 & 2 & 2 & 25 &$C_2\oplus C_2\oplus C_2$ & & & 6 \\ \cline{2-8}
		& 2 & 3 & 25 &$C_2\oplus C_2\oplus C_2$ & & 6 & \\ \cline{2-8}
		& 3 & 2 & 57 & $C_2\oplus C_2\oplus C_2$ & & & 15 \\ \cline{2-8}
		& 4 & 2 &121 & $C_2\oplus C_2\oplus C_2$ & & & 8 \\ \hline\hline
		3 & 2 & 3 & 49 & $C_2\oplus C_2\oplus C_2\oplus C_2$ & & 22 & $2\times1$ \\ \cline{2-8}
		& 3 & 3 & 113 & $C_2\oplus C_2\oplus C_2\oplus C_2$ & & 24 & \\ \cline{2-8}
		& 4 & 3 & 241 & $C_2\oplus C_2\oplus C_2\oplus C_2$ & & 26 & \\ \hline\hline
		3 & 2 & 2 & 52 & $\mathrm{M}_2^0(\mathbb{F}_q)\oplus C_2\oplus C_2$ & & 5 & \\ \hline
\end{tabular}
\caption{Number of examples found}\label{examples1}\label{summary}
\end{table}

\begin{remark}
	Note that computing Hecke operators only up to the Sturm bound does not necessarily give us all the different traces, it only ensures us that we have enough operators to generate the whole algebra. Hence it is not a bound high enough to know how many different traces there are. We can see this in a concrete example. 
	
	For level $N=911$, weight $k=2$ and trivial character, we find an example of Hecke algebra $\mathbb{T}_f$ such that the corresponding maximal ideal $\mathfrak{m}_f$ has dimension $m=1$ over $\mathbb{F}_{16}$ (and $\mathfrak{m}_f^2=0$). Here the Sturm bound $B$ is $B=\frac{kN}{12}\prod_{\ell\mid N, \ell\,\mathrm{prime}}(1+\frac{1}{\ell})=152$.
	If we compute operators up to $b=3000$, we find 116 different operators, and if we compute up to $b=5000$ we find 120 different operators. Hence we see that, in this situation, we need to go much further than the Sturm bound.
	
\end{remark}

After the previous tables, one is led to the following questions.

\begin{question}\label{quest1}
	If $\mathrm{dim}_{\mathbb{F}_q}\mathfrak{m}_f/\mathfrak{m}_f^2=1$, then $G\simeq C_2\times\mathrm{SL}_2(\mathbb{F}_q)$.
\end{question}

\begin{question}\label{quest2} 
	If $\mathrm{dim}_{\mathbb{F}_q}\mathfrak{m}_f/\mathfrak{m}_f^2=2$, then
	$$\mathrm{Im}(\rho_f)\simeq\left\{\begin{array}{l}
		(C_2\oplus C_2)\times\mathrm{SL}_2(\mathbb{F}_q), \mbox{ or } \\
		(C_2\oplus C_2\oplus C_2)\times\mathrm{SL}_2(\mathbb{F}_q), \mbox{ or } \\
		(\mathrm{M}_2^0(\mathbb{F}_q)\oplus C_2)\rtimes\mathrm{SL}_2(\mathbb{F}_q).
	\end{array}\right.$$
\end{question}

\begin{question}
	If $\mathrm{dim}_{\mathbb{F}_q}\mathfrak{m}_f/\mathfrak{m}_f^2=3$, then
	$$\mathrm{Im}(\rho_f)\simeq\left\{\begin{array}{l}
		(C_2\oplus C_2\oplus C_2)\times\mathrm{SL}_2(\mathbb{F}_q),	\mbox{ or }\\
		(C_2\oplus C_2\oplus C_2\oplus C_2)\times\mathrm{SL}_2(\mathbb{F}_q), \mbox{ or } \\
		(\mathrm{M}_2^0(\mathbb{F}_q)\oplus C_2\oplus C_2)\rtimes\mathrm{SL}_2(\mathbb{F}_q).
	\end{array}\right.$$
\end{question}

\section{Existence of $p$-elementary abelian extensions of non-solvable number fields}\label{sec_application}

In this section we use the previous results to predict the existence of $p$-elementary abelian extensions of \textit{very big} non-solvable extensions of $\mathbb{Q}$. Let $\mathbb{F}_q$ denote a finite field of characteristic $p$ and let $\rho:G_\mathbb{Q}\rightarrow\mathrm{GL}_2(\mathbb{T})$ be a Galois representation that takes values in some finite-dimensional local commutative $\mathbb{F}_q$-algebra $\mathbb{T}$. Suppose that $\mathbb{T}$ is generated by the traces of $\rho$, and that $\rho$ has big residual image, i.e. $\mathrm{Im}(\overline{\rho})=\mathrm{GL}_2^D(\mathbb{F}_q$), where $D\subseteq\mathbb{F}_q^\times$.

The explicit description that we give in Section \ref{sec_theorem} of the image of $\rho$ in this situation allows us to compute a part of a certain ray class field of the field $K$ cut out by $\overline{\rho}$ (i.e. $G_K=\ker(\overline{\rho})$). More concretely, let $\mathfrak{m}$ denote the maximal ideal of $\mathbb{T}$, assume that $\mathfrak{m}^2=0$ and let $m=\mathrm{dim}_{\mathbb{F}_q}\mathfrak{m}$.

\begin{proposition}\label{extensions}
	Let $\mathbb{F}_q$ be a finite field of characteristic $p\neq2$.
	Assume the hypothesis of Theorem $\ref{thm1}$.
	Then there exist number fields $L/K/\mathbb{Q}$ with $G_L=\ker(\rho)$ and $G_K=\ker(\overline{\rho})$ such that $\mathrm{Gal}(K/\mathbb{Q})=\mathrm{GL}_2^D(\mathbb{F}_q)$ and
	$$\mathrm{Gal}(L/\mathbb{Q})=\underbrace{\mathrm{M}_2^0(\mathbb{F}_q)\oplus\ldots\oplus\mathrm{M}_2^0(\mathbb{F}_q)}_m\rtimes\mathrm{Gal}(K/\mathbb{Q}),$$
	with $\mathrm{Gal}(K/\mathbb{Q})$ acting on $\mathrm{Gal}(L/K)$ by conjugation. 
	Moreover, the extension $L/K$ is an abelian extension of degree $p^{3dm}$ that cannot be defined over $\mathbb{Q}$ and which is unramified at all primes $\ell\nmid pN$.
\end{proposition}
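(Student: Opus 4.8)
The plan is to read off everything from Theorem~\ref{thm1} (case $p\neq 2$) via the Galois correspondence, the only non-bookkeeping point being the last clause. First I would fix an algebraic closure $\overline{\mathbb{Q}}$ and let $L\subseteq\overline{\mathbb{Q}}$ and $K\subseteq\overline{\mathbb{Q}}$ be the fixed fields of $\ker(\rho)$ and $\ker(\overline{\rho})$; both are Galois over $\mathbb{Q}$ since the kernels are closed normal subgroups, and since $\overline{\rho}=\pi\circ\rho$ we have $\ker(\rho)\subseteq\ker(\overline{\rho})$, hence $K\subseteq L$. The Galois correspondence then gives $\mathrm{Gal}(L/\mathbb{Q})\simeq\mathrm{Im}(\rho)$, $\mathrm{Gal}(K/\mathbb{Q})\simeq\mathrm{Im}(\overline{\rho})$ and $\mathrm{Gal}(L/K)\simeq\ker\big(\mathrm{Im}(\rho)\to\mathrm{Im}(\overline{\rho})\big)$, compatibly with the conjugation action of $\mathrm{Gal}(L/\mathbb{Q})$ on $\mathrm{Gal}(L/K)$, which factors through $\mathrm{Gal}(K/\mathbb{Q})$. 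Hypothesis $(a)$ gives $\mathrm{Gal}(K/\mathbb{Q})=\mathrm{GL}_2^D(\mathbb{F}_q)$, and Theorem~\ref{thm1} (with $p\neq 2$) identifies $\mathrm{Im}(\rho)$ with $\mathrm{M}_2^0(\mathbb{F}_q)^m\rtimes\mathrm{GL}_2^D(\mathbb{F}_q)$ through the split sequence $0\to\mathrm{M}_2^0(\mathfrak{m}_\mathbb{T})\to\mathrm{GL}_2^D(\mathbb{T})\to\mathrm{GL}_2^D(\mathbb{F}_q)\to1$, in which the reduction kernel is exactly the $\mathrm{M}_2^0(\mathbb{F}_q)^m$ factor and the $\mathrm{GL}_2^D(\mathbb{F}_q)$-action is conjugation of matrices. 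Transporting this along the correspondence yields $\mathrm{Gal}(L/\mathbb{Q})\simeq\mathrm{M}_2^0(\mathbb{F}_q)^m\rtimes\mathrm{Gal}(K/\mathbb{Q})$ with the asserted conjugation action on $\mathrm{Gal}(L/K)\simeq\mathrm{M}_2^0(\mathbb{F}_q)^m$.

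The elementary properties of $L/K$ then drop out. Since $\mathrm{M}_2^0(\mathbb{F}_q)$ is an additive group, $\mathrm{Gal}(L/K)$ is abelian — indeed an elementary abelian $p$-group, as $\mathrm{char}\,\mathbb{F}_q=p$ — so $L/K$ is abelian; and from $\dim_{\mathbb{F}_q}\mathrm{M}_2^0(\mathbb{F}_q)=3$ and $[\mathbb{F}_q:\mathbb{F}_p]=d$ one gets $[L:K]=\#\mathrm{M}_2^0(\mathbb{F}_q)^m=p^{3dm}$. For the unramifiedness I would invoke that $\rho=\rho_f$ is the representation attached to a mod $p$ modular form of level $N$, which is unramified at every prime $\ell\nmid pN$, i.e. $\rho(I_\ell)=1$; thus $I_\ell\subseteq\ker(\rho)=G_L$, so $\ell$ is unramified in $L/\mathbb{Q}$, and a fortiori in $L/K$.

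It remains to show that $L/K$ is not defined over $\mathbb{Q}$, which I would take to mean: there is no subfield $F\subseteq L$ with $F/\mathbb{Q}$ Galois, $F\cap K=\mathbb{Q}$ and $L=KF$. Suppose such an $F$ existed. Then $\mathrm{Gal}(L/K)$ and $\mathrm{Gal}(L/F)$ are both normal in $\mathrm{Gal}(L/\mathbb{Q})$ (since $K/\mathbb{Q}$ and $F/\mathbb{Q}$ are Galois), they intersect trivially (since $L=KF$) and generate $\mathrm{Gal}(L/\mathbb{Q})$ (since $K\cap F=\mathbb{Q}$); two normal subgroups with trivial intersection commute, so $\mathrm{Gal}(L/\mathbb{Q})=\mathrm{Gal}(L/K)\times\mathrm{Gal}(L/F)$, and restriction identifies $\mathrm{Gal}(L/F)$ with $\mathrm{Gal}(K/\mathbb{Q})$. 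This would force $\mathrm{Gal}(K/\mathbb{Q})$ to act trivially by conjugation on $\mathrm{Gal}(L/K)\simeq\mathrm{M}_2^0(\mathbb{F}_q)^m$. But that action is the standard conjugation action of $\mathrm{GL}_2^D(\mathbb{F}_q)$, and since $q\neq 2,3,5$ and $p\nmid 2$, Lemma~\ref{modules} says $\mathrm{M}_2^0(\mathbb{F}_q)$ is a simple $\mathbb{F}_p[\mathrm{GL}_2^D(\mathbb{F}_q)]$-module of dimension $3>1$, hence nontrivial, so its subspace of $\mathrm{GL}_2^D(\mathbb{F}_q)$-fixed vectors is $0$ and (for $m\geq 1$) the action is nontrivial — a contradiction. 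I do not expect a genuine obstacle in any of this: the real content sits in Theorem~\ref{thm1}, and the only step requiring an argument of its own is the non-descent to $\mathbb{Q}$, which reduces to the nontriviality of the module $\mathrm{M}_2^0(\mathbb{F}_q)$ already recorded in Lemma~\ref{modules}.
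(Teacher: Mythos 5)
Your proposal is correct and follows the same route as the paper: identify $\mathrm{Gal}(L/\mathbb{Q})\simeq\mathrm{Im}(\rho)$ and $\mathrm{Gal}(L/K)\simeq H=\mathrm{M}_2^0(\mathbb{F}_q)^m$ via Theorem \ref{thm1} and the Galois correspondence, read off that $H$ is elementary abelian of order $p^{3dm}$, inherit the ramification from $\rho$, and deduce non-descent to $\mathbb{Q}$ from the nontriviality of the conjugation action of $\mathrm{GL}_2^D(\mathbb{F}_q)$ on $\mathrm{M}_2^0(\mathbb{F}_q)$. Your write-up is in fact slightly more complete than the paper's, since you make precise what ``defined over $\mathbb{Q}$'' means and carry out the two-normal-subgroups argument that the paper leaves as a one-line assertion.
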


\begin{proof}
	Let $\overline{G}:=\mathrm{Im}(\overline{\rho})$, $G:=\mathrm{Im}(\rho)$ and $H:=\mathrm{M}_2^0(\mathbb{F}_q)^m$. By Theorem \ref{thm1}, we have that
	$$G\simeq H\rtimes\overline{G}.$$
	This gives a short exact sequence $1\rightarrow H\rightarrow G\rightarrow\overline{G}\rightarrow1$, so $\overline{G}$ acts on $H$ by conjugation through preimages.
	Let $L:=\overline{\mathbb{Q}}^{\mathrm{ker}(\rho)}$ and $K:=\overline{\mathbb{Q}}^{\mathrm{ker}(\overline{\rho})}$. Then Galois theory tells us that we have field extensions 
	$$\xymatrix{
		& L \ar@{-}[dd]^G \ar@{-}[dl]_{H} & \\
		K\ar@{-}[dr]_{\overline{G}} & & \\
		& \mathbb{Q} &
	}$$
	The group $H$ is isomorphic to $(\mathbb{Z}/p\mathbb{Z})^{3dm}$, where $q=p^d$. Thus it is a $p$-elementary abelian group, so $L/K$ is an abelian Galois extension, unramified outside $Np$ and of degree $p^{3dm}$. Moreover, since $H$ is a simple $\mathbb{F}_p[\mathrm{GL}_2^D(\mathbb{F}_q)]$-module and the conjugation action of $\overline{G}$ on $H$ is nontrivial, we have that the extension $L/K$ cannot be defined over $\mathbb{Q}$.
\end{proof}

\begin{proposition}\label{extensions2}
	Let $\mathbb{F}_q$ be a finite field of characteristic $2$ and assume the hypothesis of Theorem $\ref{thm1}$.
	Then there are integers $0\leq\alpha\leq m$ and $0\leq\beta\leq d(m-\alpha)$ such that $t=q^\alpha\cdot((q-1)2^\beta+1)$,
	and there exists number fields $L/K/\mathbb{Q}$ with $G_L=\ker(\rho)$ and $G_K=\ker(\overline{\rho})$ such that $\mathrm{Gal}(K/\mathbb{Q})=\mathrm{GL}^D_2(\mathbb{F}_q)$ and
	$$\mathrm{Gal}(L/\mathbb{Q})\simeq M\rtimes\mathrm{Gal}(K/\mathbb{Q}),$$
	where $M$ is an $\mathbb{F}_2[\mathrm{GL}^D_2(\mathbb{F}_q)]$-submodule of $\mathrm{M}_2^0(\mathfrak{m}_\mathbb{T})$ of the form $M:=\mathrm{M}_2^0(\mathbb{F}_q)^\alpha\oplus C_2^\beta$,
	and the group $\mathrm{Gal}(K/\mathbb{Q})$ acts by conjugation on $\mathrm{Gal}(L/K)$. The extension $L/K$ is an abelian extension of degree $2^{3d\alpha+\beta}$ unramified at all primes $\ell\nmid 2N$.
	
	Moreover, if $\alpha\neq0$ there is an extension $L_1$ of $K$ with $\mathrm{Gal}(L_1/K)\simeq\mathrm{M}_2^0(\mathbb{F}_q)^\alpha$ which cannot be defined over $\mathbb{Q}$ and an extension $L_2$ of $K$ with $\mathrm{Gal}(L_2/K)=C_2^\beta$, such that $L=L_1L_2$.
\end{proposition}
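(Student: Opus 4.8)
The plan is to run the same argument as in the previous proposition, now feeding in the structure theorem for $\mathrm{Im}(\rho)$ in characteristic $2$ and reading off the field extensions through the Galois correspondence. First I would invoke Theorem \ref{thm1} for $p=2$: under the present hypotheses there are integers $0\leq\alpha\leq m$ and $0\leq\beta\leq d(m-\alpha)$ with $t=q^\alpha((q-1)2^\beta+1)$, and an isomorphism $G:=\mathrm{Im}(\rho)\simeq M\rtimes\mathrm{GL}_2^D(\mathbb{F}_q)$, where $M\simeq\mathrm{M}_2^0(\mathbb{F}_q)^\alpha\oplus C_2^\beta$ is an $\mathbb{F}_2[\mathrm{GL}_2^D(\mathbb{F}_q)]$-module and $\mathrm{GL}_2^D(\mathbb{F}_q)$ acts on it by conjugation. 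Setting $\overline{G}:=\mathrm{Im}(\overline{\rho})=\mathrm{GL}_2^D(\mathbb{F}_q)$, $L:=\overline{\mathbb{Q}}^{\ker\rho}$ and $K:=\overline{\mathbb{Q}}^{\ker\overline{\rho}}$, the inclusions $\ker\rho\trianglelefteq\ker\overline{\rho}\trianglelefteq G_\mathbb{Q}$ give a tower $L/K/\mathbb{Q}$ with $\mathrm{Gal}(L/\mathbb{Q})\simeq G$, $\mathrm{Gal}(K/\mathbb{Q})\simeq\overline{G}$, $\mathrm{Gal}(L/K)\simeq M$, and $\mathrm{Gal}(K/\mathbb{Q})$ acting on $\mathrm{Gal}(L/K)$ by conjugation through lifts to $\mathrm{Gal}(L/\mathbb{Q})$; this already yields the asserted isomorphism $\mathrm{Gal}(L/\mathbb{Q})\simeq M\rtimes\mathrm{Gal}(K/\mathbb{Q})$.

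Next I would record that $|\mathrm{M}_2^0(\mathbb{F}_q)|=q^3=2^{3d}$ and $|C_2|=2$, so $M$ is elementary abelian of order $2^{3d\alpha+\beta}$; hence $L/K$ is abelian of degree $2^{3d\alpha+\beta}$, and since $\rho$ is the representation attached to a mod-$2$ Hecke algebra of level $N$ it is unramified outside $2N$, so $L/\mathbb{Q}$ — and therefore $L/K$ — is unramified at every $\ell\nmid 2N$. Now suppose $\alpha\neq0$. In the decomposition $M=\mathrm{M}_2^0(\mathbb{F}_q)^\alpha\oplus C_2^\beta$ the summand $C_2^\beta$ consists of scalar matrices, so $\overline{G}$ acts trivially on it, while $\mathrm{M}_2^0(\mathbb{F}_q)^\alpha$ is the complementary $\overline{G}$-submodule; both summands are $\overline{G}$-stable and normal in the abelian group $M$, hence normal in $G=M\rtimes\overline{G}$. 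Identifying each summand with a subgroup of $\mathrm{Gal}(L/\mathbb{Q})$, set $L_1:=L^{C_2^\beta}$ and $L_2:=L^{\mathrm{M}_2^0(\mathbb{F}_q)^\alpha}$. By the above these are Galois over $\mathbb{Q}$, with $\mathrm{Gal}(L_1/K)\simeq M/C_2^\beta\simeq\mathrm{M}_2^0(\mathbb{F}_q)^\alpha$ and $\mathrm{Gal}(L_2/K)\simeq M/\mathrm{M}_2^0(\mathbb{F}_q)^\alpha\simeq C_2^\beta$; and since the two subgroups intersect trivially in $M$, the Galois correspondence gives $L_1L_2=L^{C_2^\beta\cap\mathrm{M}_2^0(\mathbb{F}_q)^\alpha}=L$.

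It remains to show $L_1$ is not defined over $\mathbb{Q}$. If it were, we could write $L_1=EK$ with $E/\mathbb{Q}$ Galois and $E\cap K=\mathbb{Q}$; since both $E/\mathbb{Q}$ and $K/\mathbb{Q}$ are Galois, this forces $\mathrm{Gal}(L_1/\mathbb{Q})\simeq\mathrm{Gal}(E/\mathbb{Q})\times\mathrm{Gal}(K/\mathbb{Q})$, and in particular $\mathrm{Gal}(K/\mathbb{Q})$ would act trivially by conjugation on $\mathrm{Gal}(L_1/K)$. But $\mathrm{Gal}(K/\mathbb{Q})\simeq\overline{G}$ acts on $\mathrm{Gal}(L_1/K)\simeq\mathrm{M}_2^0(\mathbb{F}_q)^\alpha$ through the adjoint action of $\mathrm{GL}_2^D(\mathbb{F}_q)$ on $\mathrm{M}_2^0(\mathbb{F}_q)$, whose kernel is the group of scalar matrices and which is therefore non-trivial (already $\mathrm{SL}_2(\mathbb{F}_q)\subseteq\mathrm{GL}_2^D(\mathbb{F}_q)$ does not centralise $\mathrm{M}_2^0(\mathbb{F}_q)$ for $q\geq4$, cf.\ Lemma \ref{modules}), a contradiction.

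The substance of the argument is entirely contained in Theorem \ref{thm1}; the only points that need genuine care are the normality of the two summands inside $G$, which is what makes $L_1$ and $L_2$ Galois over $\mathbb{Q}$, and the non-triviality of the conjugation action of $\overline{G}$ on $\mathrm{M}_2^0(\mathbb{F}_q)^\alpha$, which is exactly the obstruction to $L_1$ descending to $\mathbb{Q}$. I do not expect a real difficulty here; the one subtlety worth spelling out is the translation of ``defined over $\mathbb{Q}$'' into ``$\mathrm{Gal}(K/\mathbb{Q})$ acts trivially on $\mathrm{Gal}(L_1/K)$'', which rests on the linear disjointness of two Galois extensions of $\mathbb{Q}$.
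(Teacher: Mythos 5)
Your proposal is correct and follows essentially the same route as the paper: invoke Theorem \ref{thm1} to split $\mathrm{Im}(\rho)\simeq M\rtimes\mathrm{GL}_2^D(\mathbb{F}_q)$, translate through the Galois correspondence, and separate the summands $\mathrm{M}_2^0(\mathbb{F}_q)^\alpha$ and $C_2^\beta$ as fixed fields $L_1,L_2$, with the (non)triviality of the conjugation action of $\overline{G}$ deciding descent to $\mathbb{Q}$. If anything, you are more careful than the paper's own write-up on the final step, spelling out the linear-disjointness argument behind ``cannot be defined over $\mathbb{Q}$'' rather than merely asserting it.
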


\begin{proof}
	Let $\overline{G}:=\mathrm{Im}(\overline{\rho})$, $G:=\mathrm{Im}(\rho)$ and $H:=\mathrm{M}_2^0(\mathbb{F}_q)^\alpha\oplus C_2^\beta\simeq C_2^{3d\alpha+\beta}$.
	By Proposition \ref{thm2}, we have that
	$G\simeq H\rtimes\overline{G}$.
	This gives a short exact sequence $1\rightarrow H\rightarrow G\rightarrow\overline{G}\rightarrow 1$, so $\overline{G}$ acts on $H$ by conjugation through preimages.
	
	Let $L:=\overline{\mathbb{Q}}^{\mathrm{ker}(\rho)}$ and $K:=\overline{\mathbb{Q}}^{\mathrm{ker}(\overline{\rho})}$.
	Let $H_1:=\mathrm{M}_2^0(\mathbb{F}_q)^\alpha$, $H_2:= C_2^\beta$, $L_1:=L^{H_2}$ and $L_2:=L^{H_1}$.
	By Galois theory, we have field extensions
	$$\xymatrix{
		& L \ar@{-}[dl]_{H_2}\ar@{-}[dr]^{H_1} \ar@{-}[dd]_H & & \\
		L_1\ar@{-}[dr]_{H_1} & & L_2\ar@{-}[dl]^{H_2} \ar@{-}[dr]^{\overline{G}} & \\
		& K \ar@{-}[dr]_{\overline{G}} & & E \ar@{-}[dl]^{H_2} & \\
		& & \mathbb{Q} & 
	}$$
	
	The group $H$ is an abelian group of order $2^{3d\alpha+\beta}$, and the corresponding field extension is unramified outside $2N$ because $\rho$ is.
	Moreover, since $H_2$ is contained in the centre of $G$, the action of $\overline{G}$ on $H_2$ is trivial, so $L_2=KE$, and the Galois group of the field extension $L_2/K$ is isomorphic to the Galois group of $E/\mathbb{Q}$. 
	On the other hand, $H_1$ is not contained in the centre of $G$, so the action of $\overline{G}$ on $H_1$ is nontrivial, and the extension $L_1/K$ cannot be defined over $\mathbb{Q}$.
\end{proof}
\begin{remark}
In order to show how big these extensions can be, consider one example in characteristic $p=5$. We find one fixing $N=137, k=2$ and trivial character. The corresponding Hecke algebra is defined over $\mathbb{F}_{25}$. In this case, in the notation of Proposition \ref{extensions}, this result tells us that $\mathrm{Gal}(K/\mathbb{Q})\simeq\mathrm{GL}_2(\mathbb{F}_{25})$, which has order $2^7\cdot 3^2 \cdot 5^5\cdot 13$. Assuming we can determine explicitly $K$ from that, then we would still need to compute an abelian extension of degree $5^4$ (whose existence we can predict by Proposition \ref{extensions}).
\end{remark}

\section*{Appendix: Auxiliary lemmas}\label{sec_appendix}

\begin{lemma}\label{modules}
	Fix an integer $n\geq2$ and a prime $p$ and let $k$ be a finite field with $q=p^f$ elements. Assume that $q\neq2$ if $n=2$. Let $M\subseteq\mathrm{M}_n^0(k)$ be an $\mathbb{F}_p[\mathrm{GL}^D_n(k)]$-submodule for the conjugation action. Then, either $M$ is a subspace of the scalar the matrices in $\mathrm{M}_n^0(k)$, denoted by $\mathbb{S}$, or $M=\mathrm{M}_n^0(k)$. Thus $\mathrm{M}_n^0(k)/\mathbb{S}$ is a simple $\mathbb{F}_p[\mathrm{GL}^D_n(k)]$-module, and the sequence
	$$0\rightarrow\mathbb{S}\rightarrow\mathrm{M}_n^0(k)\rightarrow T\rightarrow0$$
	does not split when $p\mid n$.
\end{lemma}

\begin{proof}
	Since any $\mathbb{F}_p[\mathrm{GL}_n^D(k)]$-submodule of $\mathrm{M}_n^0(k)$ is also an $\mathbb{F}_p[\mathrm{SL}_n(k)]$-submodule, we can apply Lemma 3.3 of \cite{Ma15}.
\end{proof}

\begin{lemma}\label{submodules}
	Let $R$ be a ring and $M$ a semisimple left $R$-module which decomposes as $M=M_1\oplus\ldots\oplus M_t$, with $M_i\subseteq M$ simple modules. Let $N\subseteq M$ be a submodule of $M$. Then $N$ is semisimple and is isomorphic to a direct sum of a subset of the modules $M_1,\ldots,M_t$.
\end{lemma}

\begin{proof}
	By Proposition 3.12 in \cite{CuRe81}, for every submodule $N\subseteq M$ there exists a submodule $N'\subseteq M$ such that $M=N\oplus N'$. Now by Corollary 14.6 in \cite{CuRe00} we have that, in this case, $N$ is isomorphic to a direct sum of a subset of the modules $M_1,\ldots, M_t$.
\end{proof}

The case of characteristic 2 is more complicated, due to the fact that the module $\mathrm{M}_2^0(\mathbb{F}_q)$ of trace-0 matrices is no longer a simple module. We will need the following lemma.

\begin{lemma}\label{p2}
	Let $k$ be a finite field of characteristic $2$ and $\# k \geq4$.
	Any $\mathbb{F}_2[\mathrm{GL}_2^D(k)]$-submodule $N$ of $\mathrm{M}_2^0(k)\oplus\ldots\oplus\mathrm{M}_2^0(k)$ is of the form
	$$N\simeq N_1\oplus\ldots\oplus N_m,$$
	where each $N_i$ is either $\mathrm{M}_2^0(k)$ or an $\mathbb{F}_2$-subspace of the scalar matrices  $\mathbb{S}\subseteq\mathrm{M}_2^0(k)$, for $1\leq i\leq m$.
\end{lemma}

\begin{proof}
	We denote $G:=\mathrm{GL}_2^D(k)$, and for any module $M$, denote by $M^n$ the direct sum $\oplus_{i=1}^nM$. Let $T$ denote the module $\mathrm{M}_2^0(k)/\mathbb{S}$, which is simple after Lemma \ref{modules}, and consider the projection $\pi:\mathrm{M}_2^0(k)^n\rightarrow T^n$.
	By Lemma \ref{submodules} we have that 
	$$\pi(N)\simeq T^\ell, \mbox{\ for\ some\ }0\leq \ell\leq n.$$
	Let $\alpha:T^\ell\rightarrow\pi(N)$ denote this isomorphism. 
	
	We will first see that we can assume, without loss of generality, that $\pi(N)$ maps onto the first $\ell$ copies of $T$ (after isomorphism).
	Consider the composition
	$$\varphi_{ij}:T\stackrel{\iota_j}{\hookrightarrow} T^\ell \stackrel{\alpha}{\rightarrow}\pi(N)\stackrel{\iota}{\hookrightarrow} T^n \stackrel{\pi_i}{\twoheadrightarrow}T,$$
	where $\iota_j$ denotes the natural inclusion of $T$ into the $j$-th component of $T^\ell$, for $1\leq j \leq \ell$, and $\pi_i$ denotes the natural projection from $T^n$ to the $i$th component $T$, for $1\leq i\leq n$.
	By Lemma 3 in \cite{Ma15} , we have that $\varphi_{ij}$ is just multiplication by a scalar in $k$, so we can write $\varphi_{ij}(t)=a_{ij}t$, for $a_{ij}\in k,\ t\in T$.
	We then can express $\varphi:=\iota\circ\alpha$ as
	$$\begin{array}{cccl}
		\varphi: & T^\ell & \rightarrow & T^n \\
		& \left(\begin{smallmatrix}t_1\\ t_2 \\ \vdots \\ t_\ell\end{smallmatrix}\right) & \mapsto &  \left(\begin{smallmatrix}a_{11} & \ldots & a_{1\ell} \\ a_{21} & \ldots & a_{2\ell} \\ \vdots & & \vdots \\ a_{n1} & \ldots & a_{n\ell}  \end{smallmatrix}\right)\left(\begin{smallmatrix}t_1\\ t_2 \\ \vdots \\ t_\ell\end{smallmatrix}\right).
	\end{array}$$
	Since $\varphi$ is injective, the matrix $A:=\left(\begin{smallmatrix}a_{11} & \ldots & a_{1\ell} \\ \vdots & & \vdots \\ a_{n1} & \ldots & a_{n\ell}  \end{smallmatrix}\right)$ has rank $\ell$. Using Gau\ss\ elimination, we can find an invertible $n\times n$ matrix $C$ such that
	$$CA=\left(\begin{smallmatrix}1 & 0 & \ldots & 0 \\ 0 & 1 & \ldots & 0 \\ \vdots & \vdots & & \vdots \\ 0 & 0 & \ldots & 1 \\ 0 & 0 & \ldots & 0 \\ \vdots & \vdots & &  \vdots \\ 0 & 0 & \ldots & 0 \end{smallmatrix}\right).$$
	Note that $\cdot C:\mathrm{M}_2^0(k)^n\rightarrow \mathrm{M}_2^0(k)^n$ and $\cdot C:T^n\rightarrow T^n$ are isomorphisms of $\mathbb{F}_2[G]$-modules. By construction we obtain
	$$C\cdot\iota(\pi(N))=C\cdot\iota\cdot\alpha(T^\ell)=C\cdot\varphi(T^\ell)=\underbrace{T\oplus\ldots\oplus T}_\ell\oplus\underbrace{0\oplus\ldots\oplus 0}_{n-\ell}.$$
	Thus, we have seen that we can assume
	$$\pi(N)=\underbrace{T\oplus\ldots\oplus T}_\ell\oplus\underbrace{0\oplus\ldots\oplus0}_{n-\ell}\subseteq T^n.$$
	We will see that $N\simeq\mathrm{M}_2^0(k)^\ell\oplus L$, where $L\subseteq \mathbb{S}^{n-\ell}$ is an $\mathbb{F}_2[G]$-submodule.
	Note that $\pi(N)=T^\ell$ is equivalent to 
	\begin{eqnarray}\label{eq1}
	N+\mathbb{S}^n=M^\ell\oplus \mathbb{S}^{n-\ell},
	\end{eqnarray}
	since $\pi(N)=\pi(M^\ell\oplus \mathbb{S}^{n-\ell})$ and $\ker(\pi)=\mathbb{S}^n$.
	
	Take $(m_1,\ldots,m_\ell, \lambda_1,\ldots, \lambda_{n-\ell})\in M^\ell\oplus\mathbb{S}^{n-\ell}$. Then equality (\ref{eq1}) tells us that there exists $x\in N$ and $(\mu_1,\ldots,\mu_n)\in\mathbb{S}^n$ such that
	$$x=(m_1,\ldots,m_\ell, \lambda_1,\ldots, \lambda_{n-\ell})+(\mu_1,\ldots,\mu_n).$$
	 In particular, if we take $m_1=\left(\begin{smallmatrix}0&1\\0&0\end{smallmatrix}\right)$, and $m_i=0$, for $2\leq i\leq \ell$, then we have that $N$ contains an element of the form
	 $$\left(\left( \begin{smallmatrix} \mu_1 & 1 \\ 0 & \mu_1 \end{smallmatrix}\right), \left( \begin{smallmatrix} \mu_2 & 0 \\ 0 & \mu_2 \end{smallmatrix}\right),\ldots,\left( \begin{smallmatrix} \mu_n & 0 \\ 0 & \mu_n \end{smallmatrix}\right)\right).$$
	 In particular, the first component of $\pi(N)$ is not 0. Since, by Lemma \ref{modules}, $\mathrm{M}_2^0(k)$ is indecomposable and all its submodules are contained in $\mathbb{S}=\ker(\pi)$, we have $M\oplus\{0\}^{n-1}\subseteq N$.
	An analogous argument for the other components gives that 
	$$M^\ell\oplus\{0\}^{n-\ell}\subseteq N\subseteq M^\ell\oplus\mathbb{S}^{n-\ell}.$$
	
	Finally, by the 4th isomorphism theorem, we have that the submodules of $M^\ell\oplus\mathbb{S}^{n-\ell}$ containing $M^\ell\oplus\{0\}^{n-\ell}$ are in bijection with the submodules of $(M^\ell\oplus\,\mathbb{S}^{n-\ell})/(M^\ell\oplus\{0\}^{n-\ell})\simeq\mathbb{S}^{n-\ell}$. 
	The module $\mathbb{S}$ is semisimple, and we know by Lemma \ref{modules} that its submodules  $S_1,\ldots, S_t$ are $\mathbb{F}_2$-subspaces of $\mathbb{S}$. The submodules of $\mathbb{S}^{n-\ell}$ are isomorphic to a direct sum of a subset of the modules $S_1,\ldots, S_t$ by Lemma \ref{submodules}. This concludes the proof.
\end{proof}

\Addresses

\end{document}